\documentclass[11pt,a4paper,oneside,reqno]{amsart}


\usepackage{latexsym}
\usepackage{amsmath}
\usepackage{amsthm}
\usepackage{amssymb}
\usepackage{amsfonts}
\usepackage{fancyhdr}
\usepackage{comment}

\usepackage{mathrsfs}                           



\newcommand{\mR}{\mathbf{R}}                    
\newcommand{\mC}{\mathbf{C}}                    
\newcommand{\R}{\mathbf{R}}                    
\newcommand{\C}{\mathbf{C}}                    
\newcommand{\abs}[1]{\lvert #1 \rvert}          
\newcommand{\norm}[1]{\lVert #1 \rVert}         
\newcommand{\br}[1]{\langle #1 \rangle}         


\newcommand{\re}{\mathrm{Re}}
\newcommand{\im}{\mathrm{Im}}

\newcommand{\eps}{\varepsilon}

\newcommand{\closure}[1]{\overline{#1}}



\newcommand{\mOp}{\mathrm{Op}}


\linespread{1.02}

\theoremstyle{definition}
\newtheorem{thm}{Theorem}[section]
\newtheorem{prop}[thm]{Proposition}

\newtheorem{lemma}[thm]{Lemma}

\newtheorem*{remark}{Remark}

\title[Carleman estimates for Dirac]{Carleman Estimates and Inverse Problems for Dirac Operators}
\author{Mikko Salo and Leo Tzou}
\address{Department of Mathematics and Statistics \\ University of Helsinki}
\email{mikko.salo@helsinki.fi}
\address{Department of Mathematics \\ Stanford University}
\email{leo.tzou@gmail.com}

\thanks{M.S.~is supported by the Academy of Finland.}
\thanks{L.T.~is supported by the Doctoral Post-Graduate Scholarship from the Natural Science and Engineering Research Council of Canada. This article was written while L.T. was visiting the University of Helsinki and TKK, whose hospitality is gratefully acknowledged.}
\thanks{The authors would like to thank Andras Vasy and Lauri Ylinen for useful comments.}


\begin{document}

\begin{abstract}
We consider limiting Carleman weights for Dirac operators and prove corresponding Carleman estimates. In particular, we show that limiting Carleman weights for the Laplacian also serve as limiting weights for Dirac operators. As an application we consider the inverse problem of recovering a Lipschitz continuous magnetic field and electric potential from boundary measurements for the Pauli Dirac operator.
\end{abstract}

\maketitle

\section{Introduction}

In this article we consider Carleman estimates for Dirac operators, with applications to inverse problems. There is an extensive literature on Carleman estimates and their use in unique continuation problems (see for instance \cite{kochtataru}). However, Carleman estimates have also become an increasingly useful tool in inverse problems, we refer to \cite{isakov_carlemansurvey} for some developments.

The recent work \cite{ksu} shows the importance of Carleman estimates in the construction of complex geometrical optics (CGO) solutions to Schr\"odinger equations. CGO solutions have been central in establishing unique identifiability results for inverse problems for elliptic equations (starting with \cite{calderon} and \cite{sylvesteruhlmann}, see surveys \cite{uhlmannicm}, \cite{uhlmannselecta}). The approach of \cite{ksu} provides a general method of constructing CGO solutions, and has led to new results in inverse problems with partial data \cite{dksu}, \cite{knudsensalo}, determination of inclusions and obstacles \cite{uhlmannwang_elasticity}, \cite{heckuhlmannwang}, \cite{uhlmannwang_twodim_system}, \cite{salowang}, and also anisotropic inverse problems \cite{DKSaU}. All the results listed are concerned with, or can be reduced to, Schr\"odinger equations whose principal part is the Laplacian.

Our aim is to extend the Carleman estimate approach of \cite{ksu} to Dirac operators and their perturbations. By a Dirac operator we mean a self-adjoint, homogeneous, first order $N \times N$ matrix operator $P_0$ with constant coefficients in $\mR^n$, whose square is $-\Delta$. Examples include Dirac operators on differential forms in $\mR^n$, and the Pauli Dirac in $\mR^3$ given by the $4 \times 4$ matrix 
\begin{equation} \label{paulidirac_definition}
P_0(D) = \left( \begin{array}{cc} 0 & \sigma \cdot D \\ \sigma \cdot D & 0 \end{array} \right),
\end{equation}
where $D = -i\nabla$ and $\sigma = (\sigma_1, \sigma_2, \sigma_3)$ is a vector of Pauli matrices with 
\begin{equation*}
\sigma_1 = \left( \begin{array}{cc} 0 & 1 \\ 1 & 0 \end{array} \right), \ \sigma_2 = \left( \begin{array}{cc} 0 & -i \\ i & 0 \end{array} \right), \ \sigma_3 = \left( \begin{array}{cc} 1 & 0 \\ 0 & -1 \end{array} \right).
\end{equation*}

The first step is to prove Carleman estimates for Dirac with a special class of weights, called limiting Carleman weights. These weights were introduced in \cite{ksu} for the Laplacian (see Section \ref{sec:carleman} for the definition), and in this case include linear and logarithmic functions. See \cite{DKSaU} for a complete characterization. One may think of a limiting weight as a function $\varphi$ such that the Carleman estimate is valid both for $\varphi$ and $-\varphi$. A typical result for Dirac is as follows. Here and below $\norm{\,\cdot\,} = \norm{\,\cdot\,}_{L^2(\Omega)}$, and $A \lesssim B$ means that $A \leq CB$ where $C > 0$ is a constant independent of $h$ and $u$.

\begin{thm}
Let $\Omega \subseteq \mR^n$, $n \geq 2$, be a bounded open set, let $P_0$ be a Dirac operator as above, and let $V \in L^{\infty}(\Omega)^{N \times N}$. Let $\varphi$ be a smooth real-valued function with $\nabla \varphi \neq 0$ near $\closure{\Omega}$, and assume that $\varphi$ is a limiting Carleman weight for the Laplacian. If $h > 0$ is sufficiently small, then one has the estimate 
\begin{equation*}
h \norm{u} + h^2 \norm{\nabla u} \lesssim \norm{e^{\varphi/h}(P_0(hD)+hV)e^{-\varphi/h} u},
\end{equation*}
for any $u \in C^{\infty}_c(\Omega)^N$.
\end{thm}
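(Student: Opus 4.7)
The crucial algebraic identity driving the argument is the factorization $(P_\varphi^0)^2 = L_\varphi\, I_N$, where $P_\varphi^0 := e^{\varphi/h} P_0(hD) e^{-\varphi/h}$ and $L_\varphi := e^{\varphi/h}(-h^2\Delta) e^{-\varphi/h}$; this follows at once from $P_0(D)^2 = -\Delta\, I_N$ and the homogeneity of $P_0$. Since the perturbation satisfies $\|hVu\| \leq h\|V\|_{L^\infty}\|u\|$, it is absorbed into the $h\|u\|$ contribution on the left once the estimate is established for $V = 0$, so my first step is to reduce to the unperturbed case.

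For $V = 0$ I would expand $\|P_\varphi^0 u\|^2 = \langle P_{-\varphi}^0 P_\varphi^0 u, u\rangle$, using that $(P_\varphi^0)^* = P_{-\varphi}^0$ because $P_0$ is self-adjoint. With $A := P_0(hD)$ and $B := P_0(\nabla\varphi)$, both formally self-adjoint, the polarised identity $P_0(\xi)P_0(\eta) + P_0(\eta)P_0(\xi) = 2\xi\cdot\eta\, I_N$ yields
\begin{equation*}
P_{-\varphi}^0 P_\varphi^0 \;=\; A^2 + B^2 + i[A, B] \;=\; -h^2\Delta + |\nabla\varphi|^2 + i[A, B],
\end{equation*}
and a direct computation unpacks the commutator as $i[A, B] = h\Delta\varphi\, I_N + hR$, where $R$ is a self-adjoint matrix-valued first-order operator with coefficients built from $\nabla\varphi$ and the matrix anticommutators $[a_j, a_k]$ of $P_0$. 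Substituting,
\begin{equation*}
\|P_\varphi^0 u\|^2 \;=\; h^2\|\nabla u\|^2 + \||\nabla\varphi|\,u\|^2 + h\int \Delta\varphi\,|u|^2 + h\langle Ru, u\rangle.
\end{equation*}
Because $|\nabla\varphi|\geq c_0 > 0$ near $\overline{\Omega}$, the first two terms on the right alone dominate $c_0^2\|u\|^2 + h^2\|\nabla u\|^2$, while the scalar term $h\int\Delta\varphi\,|u|^2 = O(h)\|u\|^2$ is a harmless lower-order correction.

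The main obstacle is controlling the indefinite matrix first-order term $h\langle Ru, u\rangle$. A bare Cauchy--Schwarz and AM--GM bound gives $|h\langle Ru, u\rangle| \leq \tfrac{1}{2}h^2\|\nabla u\|^2 + C\|u\|^2$ with a constant $C$ depending on $\varphi$ and the dimension; for general $\varphi$ this $C$ can exceed $c_0^2$, so the bare identity does not suffice. This is precisely the point at which the hypothesis that $\varphi$ is a \emph{limiting} Carleman weight enters, and I would implement it via the convexification idea of~\cite{ksu}: replace $\varphi$ by $\tilde\varphi := \varphi + \frac{h}{2\eps}\varphi^2$ for a small fixed $\eps > 0$. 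The analogous identity applied to $\tilde\varphi$ acquires additional positive contributions of size $\frac{h}{\eps}\||\nabla\varphi|\,u\|^2$ and higher, coming from derivatives of the convexifying term, and the limiting property of $\varphi$ is exactly what guarantees that this extra positivity dominates the matrix first-order term. Since $e^{\pm\varphi^2/(2\eps)}$ is bounded above and below on $\overline{\Omega}$ for $\eps$ fixed (depending only on $\|\varphi\|_{L^\infty(\Omega)}$), the Carleman estimate for $\tilde\varphi$ transfers to one for $\varphi$ at the cost of a controlled constant, and reinserting $hV$ as in the reduction step completes the proof.
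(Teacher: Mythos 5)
Your expansion $\norm{P_{0,\varphi}u}^2 = \norm{Au}^2 + \norm{Bu}^2 + (i[A,B]u|u)$ and the diagnosis that the matrix first-order part of $i[A,B]$ is the obstacle are both correct, but the proposed convexification cure does not close the gap, and the shape of the argument would overshoot what is true. If the commutator contribution really were absorbable into $\norm{Au}^2 + \norm{Bu}^2 = h^2\norm{\nabla u}^2 + \int_\Omega|\nabla\varphi|^2|u|^2\,dx \gtrsim \norm{u}^2$, you would conclude $\norm{u}\lesssim\norm{P_{0,\varphi}u}$ with no loss of $h$. That loss-free estimate is false: the symbol $P_0(\xi+i\nabla\varphi)$ is singular precisely on the nonempty set $\{\xi^2=|\nabla\varphi|^2,\ \xi\cdot\nabla\varphi=0\}$, and WKB quasimodes concentrated there give $\norm{P_{0,\varphi}u_h}=O(h)\norm{u_h}$. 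So the $h$-loss in the theorem is genuine, and the dangerous commutator term is carrying it; it is not a correction that can simply be tamed.

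Quantitatively, writing $P_0(\xi)=\sum_j\gamma_j\xi_j$, the operator you call $hR$ has semiclassical symbol $i\sum_{j,k}[\gamma_j,\gamma_k]\,\partial_k\varphi\,\xi_j$ (matrix \emph{commutators}, not anticommutators), hence it is an $O(1)$ first-order semiclassical operator, and $(hRu|u)$ can be negative of size $O(\norm{u}^2)$. Convexifying $\varphi\mapsto\varphi_\eps=\varphi+\tfrac{h}{\eps}\tfrac{\varphi^2}{2}$ replaces $\nabla\varphi$ by $(1+\tfrac{h}{\eps}\varphi)\nabla\varphi$, rescaling the dangerous coefficient and the favourable $|\nabla\varphi|^2$ term by the same factor; the genuinely new positivity that the limiting-weight hypothesis supplies, through the scalar Poisson bracket $\{a_\eps,b_\eps\}$, is only $O(h^2/\eps)\norm{u}^2$, which is $o(1)$ for fixed $\eps$ and cannot absorb an $O(1)$ negative term. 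The paper's proof never faces this issue: it establishes a two-derivative-gain Carleman estimate for the squared operator $e^{\varphi_\eps/h}(-h^2\Delta)I_N e^{-\varphi_\eps/h}=P_{0,\varphi_\eps}^2$ via G{\aa}rding in a shifted Sobolev scale, and then factors $\tfrac{h}{\sqrt\eps}\norm{u}_{H^1_{\text{scl}}}\leq C\norm{P_{0,\varphi_\eps}^2 u}_{H^{-1}_{\text{scl}}}\leq C\norm{P_{0,\varphi_\eps}u}$, using that $\br{hD}^{-1}P_{0,\varphi_\eps}$ has order zero. The matrix commutator never appears because one squares first, and the lost power of $h$ enters exactly at this step. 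A direct integration-by-parts proof at the Dirac level does exist (Proposition~\ref{prop:carleman_linear}), but it splits $A=A_\parallel+A_\perp$ along $\nabla\varphi$ so that $[A_\parallel,B]=0$, is carried out there only for the linear weight where $A_\parallel A_\perp + A_\perp A_\parallel = 0$, and as stated yields only a boundary inequality rather than the interior bound on $\norm{u}$.
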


We give two different proofs of such estimates. The first proof uses the fact that Dirac squared is the Laplacian, and gives the estimate for any limiting Carleman weight for the Laplacian. This proof, however, is based on symbol calculus and does not give Carleman estimates with boundary terms, which were crucial in the partial data result of \cite{ksu}. We will present another proof, which uses just integration by parts and gives a simple Carleman estimate with boundary terms. The second proof is valid for the linear weight.

The next step is to construct CGO solutions to Dirac equations by using the Carleman estimate. These solutions are then used to prove unique identifiability of the coefficients from boundary measurements. We carry out this program for an inverse problem for the Pauli Dirac operator with magnetic and electric potentials, which we set out to define.

Let $P_0$ be as in \eqref{paulidirac_definition} and consider the operator 
\begin{equation} \label{lv_definition}
\mathcal{L}_V = P_0(D) + V,
\end{equation}
where the potential $V$ has the form 
\begin{equation} \label{v_definition}
V = P_0(A) + Q,
\end{equation}
with $Q = \left( \begin{smallmatrix} q_+ I_2 & 0 \\ 0 & q_- I_2 \end{smallmatrix} \right)$, $A = (a_1,a_2,a_3) \in W^{1,\infty}(\Omega ; \mR^3)$, and $q_{\pm} \in W^{1,\infty}(\Omega)$. We consider $\mathcal{L}_V$ acting on $4$-vectors $u = \left( \begin{smallmatrix} u_+ \\ u_- \end{smallmatrix} \right)$ where $u_{\pm} \in L^2(\Omega)^2$. It is shown in \cite{nakamuratsuchida} that the boundary value problem 
\begin{equation*}
\left\{ \begin{array}{rll}
\mathcal{L}_V u &\!\!\!= 0 & \quad \text{in } \Omega, \\
u_+ &\!\!\!= f & \quad \text{on } \partial \Omega,
\end{array} \right.
\end{equation*}
is well posed if $\Omega$ has smooth boundary and $0$ is not in the spectrum of $\mathcal{L}_V$. There is a unique solution $u \in \mathcal{H}(\Omega)^2$ for any $f \in h(\partial \Omega)$, where $\mathcal{H}(\Omega) = \{u \in L^2(\Omega)^2 \,;\, \sigma \cdot Du \in L^2(\Omega)^2 \}$ and $h(\partial \Omega) = \mathcal{H}(\Omega)|_{\partial \Omega}$. We refer to \cite{nakamuratsuchida} for more details and a description of the trace space $h(\partial \Omega)$.

There is a well-defined Dirichlet-to-Dirichlet map 
\begin{equation} \label{ddmap}
\Lambda_V: h(\partial \Omega) \to h(\partial \Omega), \ f \mapsto u_-|_{\partial \Omega}.
\end{equation}
Thus, $\Lambda_V$ takes $u_+$ to $u_-$ on the boundary, where $u$ is the solution of the Dirichlet problem above. This map is invariant under gauge transformations: if $A$ is replaced by $A + \nabla p$ where $p|_{\partial \Omega} = 0$, the map $\Lambda_V$ stays unchanged and so does the magnetic field $\nabla \times A$.

More generally, if $\Omega$ is any bounded open set in $\mR^n$, we define the Cauchy data set 
\begin{equation*}
C_V = \{ (u_+|_{\partial \Omega}, u_-|_{\partial \Omega}) \,;\, u \in \mathcal{H}(\Omega)^2 \text{ and } \mathcal{L}_V u = 0 \text{ in } \Omega \}.
\end{equation*}
Here the traces are taken in the abstract sense as elements of $\mathcal{H}(\Omega)/H^1_0(\Omega)$. If $\partial \Omega$ is smooth and $0$ is outside the spectrum, $C_V$ is just the graph of $\Lambda_V$. Also here $C_V$ is unchanged in the gauge transformation $A \mapsto A + \nabla p$ if $p|_{\partial \Omega} = 0$.

We consider $C_V$ as the boundary measurements, and the inverse problem is to determine the magnetic field $\nabla \times A$ and the electric potentials $q_{\pm}$ from $C_V$. This inverse problem, and the correspoding inverse scattering problem at fixed energy, have been studied in \cite{isozakidirac}, \cite{gotodirac}, \cite{nakamuratsuchida}, \cite{lidirac} with varying assumptions on the potential. The next uniqueness result was proved for smooth coefficients in \cite{nakamuratsuchida} by reducing to a second order equation and using pseudodifferential conjugation. We establish this result for Lipschitz continuous coefficients by working with the Dirac system directly using the Carleman estimate approach. In particular, the proof avoids the pseudodifferential conjugation argument.

\begin{thm} \label{thm:uniqueness}
Let $\Omega \subseteq \mR^3$ be a bounded $C^1$ domain, let $A_j \in W^{1,\infty}(\Omega ; \mR^3)$, and let $q_{\pm,j} \in W^{1,\infty}(\Omega)$, $j = 1,2$. If $C_{V_1} = C_{V_2}$, then $\nabla \times A_1 = \nabla \times A_2$ and $q_{\pm,1} = q_{\pm,2}$ in $\Omega$.
\end{thm}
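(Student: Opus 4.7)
The plan is to extend the Sylvester--Uhlmann complex geometric optics (CGO) strategy to the Pauli Dirac system, using the Carleman estimate above in place of the Laplacian version. The first step is the standard orthogonality identity: from $C_{V_1}=C_{V_2}$ one derives that
\begin{equation*}
\int_\Omega \langle (V_1-V_2)u_1, u_2\rangle\, dx = 0
\end{equation*}
for all $u_1\in\mathcal{H}(\Omega)^2$ satisfying $\mathcal{L}_{V_1}u_1=0$ and all $u_2\in\mathcal{H}(\Omega)^2$ satisfying $\mathcal{L}_{V_2}^* u_2=0$. Indeed, given such $u_1$, the hypothesis furnishes $\tilde u$ with $\mathcal{L}_{V_2}\tilde u=0$ and the same Cauchy data; pairing $\mathcal{L}_{V_1}u_1-\mathcal{L}_{V_2}\tilde u=0$ with $u_2$ and using the Green identity for $P_0$, the boundary contribution $\int_{\partial\Omega}\langle P_0(\nu)(u_1-\tilde u),u_2\rangle\,dS$ vanishes by the matching of Cauchy data in the weak sense of $\mathcal{H}(\Omega)/H^1_0(\Omega)$.

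For the CGO construction, fix $\xi\in\mR^3$ and pick orthonormal $\mu_1,\mu_2\in\mR^3$ perpendicular to $\xi$. For small $h>0$ set
\begin{equation*}
\zeta_1 = \mu_1 + i(\mu_2 - h\xi/2),\qquad \zeta_2 = -\mu_1 + i(\mu_2 + h\xi/2),
\end{equation*}
so that $\zeta_j\cdot\zeta_j = O(h^2)$ and $(\zeta_1+\overline{\zeta_2})/h = -i\xi$. The function $\varphi(x)=\pm\mu_1\cdot x$ is a linear limiting Carleman weight for the Laplacian, so the preceding theorem applies to $\mathcal{L}_{V_j}$ and its adjoint. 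A standard Hahn--Banach/duality argument then yields CGO solutions
\begin{equation*}
u_j(x) = e^{-\zeta_j\cdot x/h}\bigl(a_j(x) + h\,r_j(x;h)\bigr)
\end{equation*}
with $\|r_j\|_{L^2}$ bounded uniformly in $h$, where the amplitudes $a_j$ are chosen in $\ker P_0(\zeta_j)$. For the Pauli Dirac this kernel is two-dimensional, spanned by block vectors $(v,0)^T, (0,v)^T$ with $(\sigma\cdot\zeta_j)v=0$, and along the complex ray $a_j$ solves a $\dbar$-type transport equation encoding $A_j$; I would solve this by the ansatz $a_j = e^{\Phi_j(x)}\ell_j$ with $\Phi_j$ a Cauchy integral of $A_j$ in the ray direction and $\ell_j$ a fixed kernel vector.

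Substituting the CGOs into the orthogonality identity, the prefactor becomes $e^{i\xi\cdot x}$ in the limit and the leading integrand is a pairing of $V_1-V_2 = P_0(A_1-A_2)+(Q_1-Q_2)$ with $a_1,a_2$. Using the Pauli identity $(\sigma\cdot X)(\sigma\cdot Y) = X\cdot Y + i\sigma\cdot(X\times Y)$ together with $a_j\in\ker P_0(\zeta_j)$, the magnetic contribution simplifies to expressions linear in $(\mu_1\pm i\mu_2)\cdot(A_1-A_2)$ and $(\mu_1\pm i\mu_2)\times(A_1-A_2)$. Varying $\mu_1,\mu_2,\xi$ and the kernel vectors $\ell_j$ and letting $h\to 0$ yields the vanishing of the Fourier transform of $\nabla\times(A_1-A_2)$, so $\nabla\times A_1 = \nabla\times A_2$. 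The gauge invariance of $C_V$ then reduces us to the case $A_1 = A_2$, and a second round of the orthogonality identity with amplitudes tuned to isolate the diagonal blocks of $Q_1-Q_2$ produces $q_{\pm,1} = q_{\pm,2}$ by Fourier inversion.

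The main obstacle is the Lipschitz regularity of the coefficients, since the smooth CGO construction in \cite{nakamuratsuchida} uses pseudodifferential conjugation and requires $C^2$ potentials. I would replace that step by regularizing $A_j$ and $q_{\pm,j}$ at a scale of order $h^\kappa$ for a suitable $\kappa\in(0,1)$ and absorbing the commutator errors into the remainder $r_j$; the gradient estimate $h^2\|\nabla u\|$ in the Carleman estimate above is precisely what is needed to control these errors under an $L^\infty$ perturbation $hV$. A secondary technical point, the integration by parts of the first step on a $C^1$ domain with only weak traces, is handled by the abstract Green identity available on $\mathcal{H}(\Omega)$ from \cite{nakamuratsuchida}.
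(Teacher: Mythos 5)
Your overall strategy (orthogonality identity, CGO solutions via the Carleman estimate, mollification of the coefficients at scale $h^{\kappa}$) is the same as the paper's, but there are two genuine gaps. The serious one is in the recovery of the magnetic field. When you substitute the CGO solutions into the orthogonality identity, the amplitudes $a_1 = e^{\Phi_1}\ell_1$ and $a_2 = e^{\Phi_2}\ell_2$ carry the Cauchy integrals of $A_1$ and $A_2$ respectively, so the leading term as $h \to 0$ is not $\int e^{i\xi\cdot x}\,\zeta\cdot(A_1-A_2)\,dx$ but rather $\int e^{i\xi\cdot x}\,e^{i\phi}\,\zeta\cdot(A_1-A_2)\,dx$, where $\phi$ is a Cauchy transform of $-\zeta\cdot(A_1-A_2)$ along the complex ray. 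This extra nonlinear factor $e^{i\phi}$ depends on the unknown difference $A_1-A_2$ and does not cancel; simply ``letting $h\to 0$'' does not yield the Fourier transform of the curl. This is the classical obstruction in the magnetic Schr\"odinger problem, and the paper disposes of it by invoking a separate lemma (Lemma 6.2 of \cite{saloreconstruction}, based on the argument of Eskin--Ralston \cite{eskinralston}) showing that the identity with $e^{i\phi}$ present implies the same identity with $e^{i\phi}$ replaced by $1$. Without this step, or an equivalent one, your argument does not close.

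The second gap is that you never address boundary determination, which the paper needs before any of the Fourier analysis can start. The Lipschitz CGO construction and the Fourier inversion both require the coefficients to be extended to a larger domain as compactly supported Lipschitz functions with $A_1 = A_2$ and $q_{\pm,1} = q_{\pm,2}$ outside $\Omega$; this extension is only possible (keeping $W^{1,\infty}$ regularity of the difference) once one knows $A_{1,\mathrm{tan}} = A_{2,\mathrm{tan}}$ and $q_{\pm,1} = q_{\pm,2}$ on $\partial\Omega$, which is Theorem \ref{thm:boundary} and is proved by a separate construction of solutions concentrating at boundary points. The same information is needed to justify your gauge reduction to $A_1 = A_2$: writing $A_1 - A_2 = \nabla p$ with $p|_{\partial\Omega} = 0$ requires control of $A_1-A_2$ at the boundary. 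Two further remarks: your $h$-dependent phases $\zeta_j$ with $\zeta_j\cdot\zeta_j = O(h^2)$ differ from the paper's choice of an exact null vector $\zeta$ with the frequency $k$ placed in the amplitude, which is harmless; and for the electric potentials, note that with the matrix amplitudes $P_0(\zeta)$ the leading term $P_0(\zeta)(Q_1-Q_2)P_0(\zeta)$ vanishes identically, so the paper must extract the $O(h)$ term of the expansion and needs the refined remainder estimate $\|P_0(\zeta)R\| = o(h)$ --- your phrase ``amplitudes tuned to isolate the diagonal blocks'' glosses over where the information about $Q_1-Q_2$ actually sits in the asymptotics.
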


The first step in the proof is a boundary determination result, which again was proved in \cite{nakamuratsuchida} for smooth coefficients and domains by pseudodifferential methods but which is needed here in the nonsmooth case. We use arguments of \cite{brownboundary}, \cite{brownsalo} given for scalar equations, and construct solutions to the Dirac equation which concentrate near a boundary point and oscillate at the boundary. These solutions can be used to find the values of the tangential component $A_{\text{tan}} = A - (A\cdot \nu)\nu$ of $A$ and of $q_{\pm}$ at the boundary. Here $\nu$ is the outer unit normal to $\partial \Omega$.

\begin{thm} \label{thm:boundary}
Let $\Omega \subseteq \mR^3$ be a bounded $C^1$ domain and $A, q_{\pm} \in W^{1,\infty}(\Omega)$. Then $C_V$ uniquely determines $A_{\text{tan}}|_{\partial \Omega}$ and $q_{\pm}|_{\partial \Omega}$.
\end{thm}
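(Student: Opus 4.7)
The plan follows the boundary determination technique developed in \cite{brownboundary}, \cite{brownsalo} for scalar elliptic equations, adapted to the Pauli Dirac system. The first step is an integral identity. Given $u_1 \in \mathcal{H}(\Omega)^2$ with $\mathcal{L}_{V_1} u_1 = 0$, the hypothesis $C_{V_1} = C_{V_2}$ provides some $u_2 \in \mathcal{H}(\Omega)^2$ with $\mathcal{L}_{V_2} u_2 = 0$ and the same Cauchy data, so $w := u_1 - u_2$ has vanishing trace and satisfies $\mathcal{L}_{V_2} w = (V_2 - V_1) u_1$. Pairing against any solution $v$ of the adjoint equation $(P_0(D) + V_2^*) v = 0$ and integrating by parts gives
\begin{equation*}
\int_\Omega \bigl[(V_1 - V_2) u_1 \bigr] \cdot \overline{v}\, dx = 0.
\end{equation*}

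The main step is to construct, for a fixed $x_0 \in \partial \Omega$, geometric optics solutions that concentrate at $x_0$ and oscillate in a tangential direction. After a local rotation I may assume the inward unit normal at $x_0$ is $e_3$; let $\omega$ be any unit tangent vector. The vector $\zeta = e_3 + i\omega \in \mC^3$ satisfies $\zeta \cdot \zeta = 0$, so $P_0(\zeta)^2 = 0$ and $\ker P_0(\zeta)$ is nontrivial. I will construct
\begin{equation*}
u_1^N = e^{-N \zeta \cdot (x-x_0)} \bigl( \chi(N^{1/2}(x-x_0))\, a_1(x) + r_{1,N}(x) \bigr),
\end{equation*}
together with a companion solution $v^N$ of the adjoint equation with phase $e^{-N \overline{\zeta} \cdot (x-x_0)}$ and amplitude $a_2$. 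The principal amplitudes $a_j(x_0)$ are taken in the appropriate kernels prescribed by the leading symbol equation, and each $a_j(x)$ is then propagated by a first-order transport equation, solvable in $W^{1,\infty}$ thanks to the Lipschitz regularity of $A$ and $q_\pm$. The cutoff $\chi$ localizes on the scale $N^{-1/2}$, which is larger than the oscillation scale $N^{-1}$ but small enough that $\Omega$ is well-approximated by a half-space on the support of $\chi(N^{1/2}(\cdot - x_0))$. The correctors $r_{j,N}$ are obtained by solving an inhomogeneous Dirac equation on the half-space with a remainder bound of the form $\|r_{j,N}\|_{L^2(\Omega)} = o(\|\chi(N^{1/2}\cdot)\,a_j\|_{L^2})$, relying on the integration-by-parts Carleman estimate with linear weight developed in the proof of Theorem~1.1.

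Substituting $u_1^N$ and $v^N$ into the integral identity, the combined exponential factor becomes $e^{-2N e_3 \cdot (x-x_0)} e^{-2iN\omega\cdot(x-x_0)}$: the real decay localizes against $\partial\Omega$ near $x_0$, while the tangential oscillation acts as a Fourier localizer at the tangent frequency $2\omega$. Rescaling $x - x_0 = y/N$ and passing to the limit $N \to \infty$, the identity reduces (modulo an explicit universal integral) to
\begin{equation*}
\bigl[(V_1 - V_2)(x_0)\, a_1(x_0) \bigr]\cdot \overline{a_2(x_0)} = 0
\end{equation*}
for every unit tangent $\omega$ and every admissible pair of amplitudes. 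Because $V = P_0(A) + Q$ with $Q$ block-diagonal and $P_0(A)$ off-block-diagonal, varying $\omega$ over $\mathbb{S}^2 \cap T_{x_0}\partial\Omega$ and the two-dimensional families of $a_j(x_0)$ produces enough linear tests to conclude $q_{\pm,1}(x_0) = q_{\pm,2}(x_0)$ and $(A_1 - A_2)_{\text{tan}}(x_0) = 0$, as required.

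The hard part is the construction in the second paragraph: producing vector-valued geometric optics solutions for the Dirac system with amplitudes of only Lipschitz regularity and a corrector small enough to survive the $N$-rescaling in Step~3. The Dirac-specific wrinkle, absent in the scalar setting, is that $a_1(x_0)$ and $a_2(x_0)$ are constrained to lie in the half-dimensional kernels of $P_0(\zeta)$ and of $P_0(\overline{\zeta})^*$; one must also verify that the resulting bilinear pairings $\bigl[(V_1 - V_2) a_1\bigr]\cdot\overline{a_2}$ scan a rich enough class of matrix functionals to recover every component of $V_1 - V_2$ that is not protected by the gauge $A \mapsto A + \nabla p$.
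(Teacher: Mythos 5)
Your proposal goes after the result directly with a Dirac geometric optics ansatz, whereas the paper takes a structurally different and much shorter route: using the factorization $\mathcal{L}_V(P_0(D+A)-Q_I) = -\Delta + 2A\cdot D + \tilde W$, it builds a \emph{scalar} boundary-concentrating solution $u = u_0 + \tilde u_1$ to $(-\Delta + 2A\cdot D + \lambda)u = 0$ using Brown--Salo's lemma verbatim, then sets $U = (P_0(D+A)-Q_I)u + \tilde R$ and solves a Dirac equation with right-hand side $(\lambda - \tilde W)u$ for the corrector $\tilde R$. This reuses the scalar machinery wholesale and sidesteps exactly the ``hard part'' you flag at the end of your sketch.

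There is a genuine gap in your construction that is specific to the $C^1$ hypothesis. You propose a linear phase $e^{-N\zeta\cdot(x-x_0)}$ with $\zeta = e_3 + i\omega$ and a cutoff on scale $N^{-1/2}$, so that the real part $e^{-Ne_3\cdot(x-x_0)}$ decays into the half-space $\{e_3\cdot(x-x_0)>0\}$. But for a $C^1$ domain the defining function satisfies only $\rho(x) - e_3\cdot(x-x_0) = O(|x-x_0|\,\omega(|x-x_0|))$ with $\omega$ a mere modulus of continuity for $\nabla\rho$; at distance $|x-x_0|\sim N^{-1/2}$ this difference, multiplied by the large parameter $N$ in the exponent, is $O\bigl(N^{1/2}\omega(N^{-1/2})\bigr)$, which can diverge (take $\omega(t)=1/\log(1/t)$). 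So the half-space approximation of the domain does not hold on the scale of your cutoff, and $e^{-Ne_3\cdot(x-x_0)}$ need not concentrate against $\partial\Omega$ near $x_0$; it may even blow up inside $\Omega$. Brown--Salo (and the paper) avoid this by putting $\rho(x)$ itself, not its linear approximation, in the phase --- $u_0 = e^{N(i\hat t\cdot x - \rho(x))}\eta_M$ --- and by coupling the cutoff scale $M^{-1}$ and the decay rate $N^{-1}$ via the boundary regularity: $N^{-1} = M^{-1}\omega(M^{-1})$. Your $N^{-1/2}$ scaling does not use this calibration. Separately, your plan to control the corrector $r_{j,N}$ via the linear-weight integration-by-parts Carleman estimate (Proposition~\ref{prop:carleman_linear}) is not viable as stated: that estimate has its own small parameter $h$ unrelated to $N$, contains boundary terms, and requires $V=0$. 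The paper instead invokes the $L^2$ solvability result Proposition~\ref{prop:inhomogeneous_eq_l2} with a \emph{fixed} $h=h_0$ and obtains smallness of $\tilde R$ from the Brown--Salo bound $\|u\|_{L^2}\leq CN^{-1/2}$ on the source term, not from a Carleman parameter.

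The final algebraic step in your sketch --- using $\zeta^2=0$ and the anticommutation identity \eqref{pzero_zeta_xi}, varying $\hat t$, and exploiting the block structure to separate $A_{\mathrm{tan}}$ from $q_\pm$ --- matches the paper's computation leading to $0 = -P_0(A) + (A\cdot\hat t)P_0(\hat t) + Q_I$ and is correct.
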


Besides their independent interest, a major motivation for studying Dirac operators comes from inverse problems for other elliptic systems. These problems have often been treated by reducing the system to a second order Schr\"odinger equation, such as with the Maxwell equations \cite{olasomersalo} or linear elasticity \cite{nakamurauhlmann}, \cite{nakamurauhlmannerratum}, \cite{eskinralston_elasticity}. These reductions are however not without problems, and for instance counterparts of the partial data result in \cite{ksu} for systems may be difficult to prove in this way. For the Maxwell equations, there is another useful reduction to a $8 \times 8$ Dirac system (see \cite{olasomersalo}) which may be more amenable to a partial data result.

Another benefit of working with first order systems directly is that the reduction to a second order system may require extra derivatives of the coefficients. For instance, the reduction of Maxwell equations to a Schr\"odinger equation in \cite{olasomersalo} requires two derivatives of the coefficients, while the Dirac system only requires one derivative. Also, the Carleman estimates are valid for low regularity coefficients. Theorem \ref{thm:uniqueness}, whose proof uses the first order structure and Carleman estimates, may be thought of as an example result with improved regularity assumptions.

For other work on Carleman estimates and unique continuation for Dirac operators, we refer to \cite{berthier_dirac}, \cite{jerison_dirac}, \cite{mandache_dirac}. Also, while writing this article, we became aware of the work of M.~Eller \cite{eller_aip}, which also considers Carleman estimates for first order systems via integration by parts arguments.

The structure of the article is as follows. In Section 2 we give different proofs of Carleman estimates for Dirac with limiting weights. Section 3 gives a construction of CGO solutions for \eqref{lv_definition} with smooth coefficients, and the case of Lipschitz coefficients is considered in Section 4. The last two sections prove the uniqueness results for the inverse problem, Theorems \ref{thm:uniqueness} and \ref{thm:boundary}.

\section{Carleman estimates} \label{sec:carleman}

Let $P_0(\xi)$ be an $N \times N$ matrix depending on $\xi \in \mR^n$, such that each element of $P_0(\xi)$ is of the form $a \cdot \xi$ where $a \in \mC^n$. We assume the two conditions 
\begin{equation*}
P_0(\xi)^2 = \xi^2 I_N, \quad P_0(\xi)^* = P_0(\xi),
\end{equation*}
for all $\xi \in \mR^n$. These conditions imply that the operator $P_0(hD)$, where $h > 0$ is a small parameter, is a self-adjoint semiclassical Dirac operator:
\begin{equation*}
P_0(hD)^2 = (-h^2 \Delta) I_N, \quad P_0(hD)^* = P_0(hD).
\end{equation*}
We may as well define $P_0(\xi)$ for $\xi \in \mC^n$. Then $P_0(\xi)^* = P_0(\bar{\xi})$, but one still has $P_0(\xi)^2 = \xi^2 I_N$. This and the linearity of $P_0$ imply 
\begin{equation} \label{pzero_zeta_xi}
P_0(\zeta)P_0(\xi) + P_0(\xi)P_0(\zeta) = 2(\zeta \cdot \xi)I_N \quad \text{for } \zeta, \xi \in \mC^n.
\end{equation}

In this section $\Omega$ will be a bounded open set in $\mR^n$, where $n \geq 2$. We wish to prove a Carleman estimate, which is a bound from below for the conjugated operator 
\begin{equation*}
P_{0,\varphi} = e^{\varphi/h} P_0(hD) e^{-\varphi/h}.
\end{equation*}
It is well known that such bounds exist if the weight $\varphi$ enjoys some pseudoconvexity properties. On the other hand, in the applications to inverse problems one needs this estimate both for $\varphi$ and $-\varphi$, and the weight can only satisfy a degenerate pseudoconvexity assumption. This is the case for the limiting Carleman weights introduced in \cite{ksu}, where also a convexification procedure was given for obtaining Carleman estimates for these special weights.

We recall that $\varphi \in C^{\infty}(\tilde{\Omega} \,;\, \mR)$ is a limiting Carleman weight for the Laplacian in the open set $\tilde{\Omega}$, where $\Omega \subset \subset \tilde{\Omega}$, if $\nabla \varphi \neq 0$ and $\{a,b\} = 0$ when $a = b = 0$, where $a$ and $b$ are the real and imaginary parts, respectively, of the semiclassical Weyl symbol of the conjugated scalar operator $e^{\varphi/h} (-h^2 \Delta) e^{-\varphi/h}$. Examples are the linear weight $\varphi(x) = \alpha \cdot x$ where $\alpha \in \mR^n$, $\abs{\alpha} = 1$, and the logarithmic weight $\varphi(x) = \log\,\abs{x-x_0}$ where $x_0 \notin \Omega$.

One can deduce a Carleman estimate for the perturbed Dirac operator $P_0(hD) + hV$ directly from a corresponding estimate for the Laplacian. We use the semiclassical Sobolev spaces  
\begin{equation*}
\norm{u}_{H^s_{\text{scl}}} = \norm{\br{hD}^s u}_{L^2}
\end{equation*}
where $\br{\xi} = (1+\abs{\xi}^2)^{1/2}$.

The estimate for Laplacian is as follows. This was proved in \cite{ksu} with a gain of one derivative, i.e.~$\norm{u}_{H^{s+1}_{\text{scl}}}$ controlled by the right hand side. We will give a slightly different proof using the G{\aa}rding inequality to obtain a gain of two derivatives, which will give an improved estimate for Dirac. Below, we will use the conventions of semiclassical calculus. In particular we consider the usual semiclassical symbol classes $S^m$, and relate to symbols $a \in S^m$ operators $A = \mOp_h(a)$ via Weyl quantization. See \cite{dimassisjostrand} for more details. Also, we write $(u|v) = \int_{\Omega} u \cdot \bar{v} \,dx$ and $\norm{u} = (u|u)^{1/2}$.

\begin{lemma}
Let $\varphi$ be a limiting Carleman weight for the Laplacian, and let $\varphi_{\varepsilon} = \varphi + \frac{h}{\varepsilon} \frac{\varphi^2}{2}$ be a convexified weight. Then for $h \ll \varepsilon \ll 1$ and $s \in \mR$, 
\begin{equation} \label{laplacian_estimate}
\frac{h}{\sqrt{\varepsilon}} \norm{u}_{H^{s+2}_{\text{scl}}} \leq C \norm{e^{\varphi_{\varepsilon}/h} (-h^2 \Delta) I_N e^{-\varphi_{\varepsilon}/h} u}_{H^s_{\mathrm{scl}}}
\end{equation}
for all $u \in C_c^{\infty}(\Omega)^N$.
\end{lemma}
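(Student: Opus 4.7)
My plan is the standard factor--commute--G{\aa}rding scheme, augmented by one application of semiclassical elliptic regularity to pick up the second derivative. Since $P_{\varphi_{\varepsilon}} = e^{\varphi_{\varepsilon}/h}(-h^2\Delta)I_N e^{-\varphi_{\varepsilon}/h}$ acts diagonally on the $N$ components of $u$, it suffices to treat the scalar case. Write $P_{\varphi_{\varepsilon}} = A + iB$ with $A = A^*$ and $B = B^*$ the Weyl quantisations of the real and imaginary parts of the symbol,
\begin{equation*}
a(x,\xi) = \abs{\xi}^2 - \abs{\nabla\varphi_{\varepsilon}}^2, \qquad b(x,\xi) = 2\xi\cdot\nabla\varphi_{\varepsilon}.
\end{equation*}
Expanding the square yields the basic identity $\norm{P_{\varphi_{\varepsilon}} u}^2 = \norm{Au}^2 + \norm{Bu}^2 + (i[A,B]u | u)$, which will drive the estimate.

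The core analytic ingredient is a lower bound on the commutator. By Weyl calculus $\frac{i}{h}[A,B] = \mOp_h(\{a,b\}) + O(h^2)$ at the symbol level, and a direct computation gives
\begin{equation*}
\{a,b\}(x,\xi) = 4(\nabla^2\varphi_{\varepsilon}\,\xi)\cdot\xi + 4(\nabla^2\varphi_{\varepsilon}\,\nabla\varphi_{\varepsilon})\cdot\nabla\varphi_{\varepsilon}.
\end{equation*}
For the unconvexified weight, the limiting Carleman weight condition forces this expression to vanish on the characteristic set $\{\abs{\xi}^2 = \abs{\nabla\varphi}^2,\,\xi\cdot\nabla\varphi = 0\}$, so a smooth division argument provides $\{a_0,b_0\} = \alpha\,a_0 + \beta\,b_0$ with $\alpha, \beta \in C^{\infty}$. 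The convexification replaces $\nabla^2\varphi$ by $\nabla^2\varphi_{\varepsilon} = (1+\tfrac{h\varphi}{\varepsilon})\nabla^2\varphi + \tfrac{h}{\varepsilon}\nabla\varphi\otimes\nabla\varphi$, and the rank-one piece produces a positive contribution of size $h/\varepsilon$ even on the characteristic variety. Unwrapping this leads to a symbolic inequality of the form
\begin{equation*}
\{a,b\}(x,\xi) + C\bigl(\abs{a(x,\xi)} + \abs{b(x,\xi)}\bigr)\br{\xi} \geq \frac{c_0 h}{\varepsilon}\br{\xi}^2
\end{equation*}
uniformly on $\closure{\Omega}\times\mR^n$ for $h/\varepsilon$ small. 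Quantising the positive symbol via the sharp G{\aa}rding inequality, absorbing the multiples of $a, b$ into $\norm{Au}^2 + \norm{Bu}^2$ by Cauchy--Schwarz, and handling the $O(h^2)$ symbol remainder, the basic identity upgrades to
\begin{equation*}
\norm{P_{\varphi_{\varepsilon}} u}^2 \geq \tfrac{1}{2}\bigl(\norm{Au}^2 + \norm{Bu}^2\bigr) + \frac{c_1 h^2}{\varepsilon}\norm{u}_{H^1_{\mathrm{scl}}}^2.
\end{equation*}

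To promote the $H^1_{\mathrm{scl}}$-control on $u$ to $H^2_{\mathrm{scl}}$-control, I exploit ellipticity: the shifted symbol $a(x,\xi) + 2\abs{\nabla\varphi}^2 = \abs{\xi}^2 + \abs{\nabla\varphi}^2 + O(h/\varepsilon)$ is uniformly elliptic of order two on $\closure{\Omega}$, so the semiclassical elliptic parametrix gives $\norm{u}_{H^2_{\mathrm{scl}}} \lesssim \norm{Au} + \norm{u}$. Squaring and inserting into the previous inequality, then absorbing the stray $\norm{u}^2$ against $\frac{h^2}{\varepsilon}\norm{u}_{H^1_{\mathrm{scl}}}^2$ (valid for $h \ll \varepsilon$), establishes \eqref{laplacian_estimate} at $s=0$. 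The general $s \in \mR$ case follows by applying the $s=0$ estimate to $\br{hD}^s u$ and absorbing the commutator $[P_{\varphi_{\varepsilon}}, \br{hD}^s] \in h\,\mOp_h(S^{s+1})$, whose contribution is a factor $\sqrt{\varepsilon}$ smaller than the main term. The principal obstacle is the symbolic positivity inequality for $\{a,b\}$: one must verify that the convexification contribution dominates on the characteristic variety with the full $\br{\xi}^2$ weight (not merely $\br{\xi}$), uniformly on $\closure{\Omega}$, since precisely this stronger weight is what powers the two-derivative gain at the G{\aa}rding step.
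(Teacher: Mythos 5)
Your route is genuinely different from the paper's. Both start from $\norm{P_{\varphi_\varepsilon}u}^2 = \norm{Au}^2 + \norm{Bu}^2 + (i[A,B]u|u)$, but they diverge at the step that extracts two derivatives. The paper introduces the \emph{order-four} elliptic symbol $d_{\varepsilon} = 4f'(\varphi)^2\abs{\nabla\varphi}^4 + a_{\varepsilon}^2 + b_{\varepsilon}^2$ and rewrites
$h\{a_{\varepsilon},b_{\varepsilon}\} = \tfrac{h^2}{\varepsilon}d_{\varepsilon} + hm_{\varepsilon}a_{\varepsilon} + hl_{\varepsilon}b_{\varepsilon} - \tfrac{h^2}{\varepsilon}(a_{\varepsilon}^2 + b_{\varepsilon}^2)$; the easy G{\aa}rding inequality for $D_{\varepsilon}$ delivers $\tfrac{h^2}{\varepsilon}\norm{v}_{H^2_{\text{scl}}}^2$ directly from the commutator, while the subtracted squares are absorbed (for $h\ll\varepsilon$) into the $\norm{A_\varepsilon v}^2 + \norm{B_\varepsilon v}^2$ already present. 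You instead settle for a weaker commutator bound and then invoke semiclassical elliptic regularity for $A$ to pick up the second derivative. That scheme does work, and arguably requires fewer symbol manipulations, but two of the intermediate assertions are imprecise. First, your ``symbolic positivity inequality'' $\{a,b\} + C(\abs{a}+\abs{b})\br{\xi} \geq \tfrac{c_0 h}{\varepsilon}\br{\xi}^2$ cannot be quantised by G{\aa}rding as stated, because $\abs{a_{\varepsilon}}$ and $\abs{b_{\varepsilon}}$ are not smooth symbols; what actually goes through is the operator-level absorption you also describe (Cauchy--Schwarz on $(\mOp_h(m_{\varepsilon}a_{\varepsilon})u|u)$ and $(\mOp_h(l_{\varepsilon}b_{\varepsilon})u|u)$ against $\norm{Au}$ and $\norm{Bu}$), which is the analogue of the paper's smooth replacement by $a_{\varepsilon}^2 + b_{\varepsilon}^2$. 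Second, after the absorption the surviving positive part of $\{a_{\varepsilon},b_{\varepsilon}\}$ is the order-\emph{zero} symbol $\tfrac{4h}{\varepsilon}f'(\varphi)^2\abs{\nabla\varphi}^4$, so the commutator only yields $\tfrac{c_1 h^2}{\varepsilon}\norm{u}^2$ in $L^2$, not $\tfrac{c_1 h^2}{\varepsilon}\norm{u}_{H^1_{\text{scl}}}^2$ as you claim. Fortunately this does not derail your argument: elliptic regularity gives $\norm{u}_{H^2_{\text{scl}}} \lesssim \norm{Au} + \norm{u}$, a gain of \emph{two} derivatives over $L^2$, and since $\tfrac{ch^2}{\varepsilon}\norm{u}_{H^2_{\text{scl}}}^2 \leq \tfrac{cCh^2}{\varepsilon}\norm{Au}^2 + \tfrac{cCh^2}{\varepsilon}\norm{u}^2$ is controlled by $\tfrac{1}{2}\norm{Au}^2 + \tfrac{c_1h^2}{\varepsilon}\norm{u}^2$ once $c$ and $h^2/\varepsilon$ are small, the estimate \eqref{laplacian_estimate} at $s=0$ follows; your shift to general $s$ matches the paper's. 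In short: correct strategy with a clean alternative to the $d_{\varepsilon}$ trick, but the claimed $H^1_{\text{scl}}$ gain from the commutator should be $L^2$, and the absolute-value symbol inequality should be replaced by the Cauchy--Schwarz absorption at the operator level.
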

\begin{proof}
First extend $\varphi$ smoothly outside $\Omega$ so that $\varphi \equiv 1$ outside a large ball. In this way one can use Weyl symbol calculus in $\mR^n$, and the choice of extension does not affect the final estimates since the differential operators are only applied to functions supported in $\Omega$.

Let $P_{\varphi} = e^{\varphi/h} (-h^2 \Delta) e^{-\varphi/h}$, and write $P_{\varphi} = A + iB$ with $A$ and $B$ self-adjoint, so $A = -h^2 \Delta - \abs{\nabla \varphi}^2$, $B = \nabla \varphi \circ hD + hD \circ \nabla \varphi$. If $v \in C^{\infty}_c(\Omega)$ we integrate by parts to get 
\begin{equation} \label{v_integrate_parts}
\norm{P_{\varphi} v}^2 = ((A+iB)v|(A+iB)v) = \norm{Av}^2 + \norm{Bv}^2 + (i[A,B]v|v).
\end{equation}
Let the semiclassical Weyl symbols of $A$ and $B$ be given by $a = \xi^2 - \abs{\nabla \varphi}^2$ and $b = 2\nabla \varphi \cdot \xi$. We recall the composition formula for  semiclassical symbols $p$ and $q$: the operator $R = PQ$ has symbol 
\begin{equation*}
r \sim \sum_{\alpha,\beta} \frac{h^{\abs{\alpha+\beta}}(-1)^{\abs{\alpha}}}{(2i)^{\abs{\alpha+\beta}}\alpha!\beta!} (\partial_x^{\alpha} \partial_{\xi}^{\beta} p(x,\xi)) (\partial_{\xi}^{\alpha} \partial_x^{\beta} q(x,\xi)) = pq + \frac{h}{2i} \{p,q\} + O(h^2).
\end{equation*}
The commutator is given by 
\begin{equation*}
i[A,B] = \mOp_h(h\{a,b\}).
\end{equation*}

We now replace $\varphi$ by $\varphi_{\varepsilon} = f(\varphi)$ where $f(\lambda) = \lambda + \frac{h}{\varepsilon} \frac{\lambda^2}{2}$. Let $A_{\varepsilon}$ and $B_{\varepsilon}$ be the corresponding operators with symbols $a_{\varepsilon}$ and $b_{\varepsilon}$, and let $\eta = f'(\varphi)\xi$. A computation in \cite{ksu} implies that whenever $a_{\varepsilon}(x,\eta) = b_{\varepsilon}(x,\eta) = 0$ one has 
\begin{equation*}
\{a_{\varepsilon},b_{\varepsilon}\}(x,\eta) = 4 f''(\varphi) f'(\varphi)^2 \abs{\nabla \varphi}^4 + f'(\varphi)^3 \{a,b\}(x,\xi).
\end{equation*}
For the following facts on symbols we assume that $x$ is near $\closure{\Omega}$. The limiting Carleman condition and some algebra (see \cite{ksu}, \cite{knudsensalo}) show that 
\begin{equation*}
\{a_{\varepsilon},b_{\varepsilon}\}(x,\eta) = 4 f''(\varphi) f'(\varphi)^2 \abs{\nabla \varphi}^4 + m_{\varepsilon}(x) a_{\varepsilon}(x,\eta) + l_{\varepsilon}(x,\eta) b_{\varepsilon}(x,\eta),
\end{equation*}
where 
\begin{equation*}
m_{\varepsilon}(x) = -4 f'(\varphi) \frac{\varphi'' \nabla \varphi \cdot \nabla \varphi}{\abs{\nabla \varphi}^2}, \quad l_{\varepsilon}(x,\eta) = \left( \frac{4 \varphi'' \nabla \varphi}{\abs{\nabla \varphi}^2} + \frac{2 f''(\varphi) \nabla \varphi}{f'(\varphi)} \right) \cdot \eta.
\end{equation*}
The symbol of $i[A_{\eps},B_{\eps}]$ is then given by 
\begin{equation*}
h\{a_{\eps},b_{\eps}\} = \frac{4h^2}{\varepsilon} f'(\varphi)^2 \abs{\nabla \varphi}^4 + h m_{\varepsilon} a_{\varepsilon} + h l_{\varepsilon} b_{\varepsilon}.
\end{equation*}
The first term is always positive near $\closure{\Omega}$ since $\nabla \varphi$ is nonvanishing. To obtain a $H^2_{\text{scl}}$ bound we introduce the symbol 
\begin{equation*}
d_{\varepsilon} = 4 f'(\varphi)^2 \abs{\nabla \varphi}^4 + a_{\eps}^2 + b_{\eps}^2,
\end{equation*}
and we write the earlier identity as 
\begin{equation} \label{commsymbol_new}
h\{a_{\eps},b_{\eps}\} = \frac{h^2}{\varepsilon} d_{\eps} + h m_{\varepsilon} a_{\varepsilon} + h l_{\varepsilon} b_{\varepsilon} - \frac{h^2}{\varepsilon} (a_{\eps}^2 + b_{\eps}^2).
\end{equation}

Suppose that $h \ll \eps \ll 1$. Since $a_{\varepsilon}$ is elliptic and of order $2$, there is a constant $c_0 > 0$, independent of $\eps$, such that 
\begin{equation*}
d_{\varepsilon}(x,\eta) \geq c_0 \br{\eta}^4, \quad x \text{ near } \closure{\Omega}, \ \eta \in \mR^n.
\end{equation*}
The easy G{\aa}rding inequality implies that for $h$ small, 
\begin{equation*}
(D_{\varepsilon} v|v) \geq \frac{c_0}{2} \norm{v}_{H^2_{\text{scl}}}^2, \quad v \in C_c^{\infty}(\Omega).
\end{equation*}
On the other hand, we have the quantizations 
\begin{align*}
\mOp_h(a_{\eps}^2) &= A_{\eps}^2 + h^2 q_1(x), \\
\mOp_h(b_{\eps}^2) &= B_{\eps}^2 + h^2 q_2(x), \\
\mOp_h(m_{\eps} a_{\eps}) &= \frac{1}{2} m_{\eps} \circ A_{\eps} + \frac{1}{2} A_{\eps} \circ m_{\eps} + h^2 q_3(x), \\
\mOp_h(l_{\eps} b_{\eps}) &= \frac{1}{2} L_{\eps} B_{\eps} + \frac{1}{2} B_{\eps} L_{\eps} + h^2 q_4(x),
\end{align*}
where the $q_j$ are smooth functions which, together with their derivatives, are bounded uniformly with respect to $\eps$ near $\closure{\Omega}$. It follows from \eqref{commsymbol_new} that 
\begin{multline*}
(i[A_{\eps},B_{\eps}]v|v) = \frac{h^2}{\eps} (D_{\eps}v|v) + \frac{h}{2} \big[(A_{\eps}v|m_{\eps}v) + (m_{\eps}v|A_{\eps}v) \\
 + (B_{\eps}v|L_{\eps}v) + (L_{\eps}v|B_{\eps}v)\big] - \frac{h^2}{\eps} (\norm{A_{\eps}v}^2 + \norm{B_{\eps}v}^2) - h^3 (q_5 v|v) \\
 \geq \frac{c_0}{2} \frac{h^2}{\eps} \norm{v}_{H^2_{\text{scl}}}^2 - C h \norm{A_{\eps}v} \,\norm{v} - C h \norm{B_{\eps} v} \,\norm{v}_{H^1_{\text{scl}}} \\
  - \frac{h^2}{\eps} (\norm{A_{\eps}v}^2 + \norm{B_{\eps}v}^2) - C h^3 \norm{v}^2.
\end{multline*}
Recall that $h \ll \eps \ll 1$, where $\eps$ is fixed (depending on the constants $C$). Using the inequality $\abs{\alpha \beta} \leq \delta \abs{\alpha}^2 + \frac{1}{4 \delta} \abs{\beta}^2$ we obtain 
\begin{equation*}
(i[A_{\eps},B_{\eps}]v|v) \geq \frac{c_0}{4} \frac{h^2}{\eps} \norm{v}_{H^2_{\text{scl}}}^2 - \frac{1}{2} \norm{A_{\eps}v}^2 - \frac{1}{2} \norm{B_{\eps}v}^2.
\end{equation*}
Now \eqref{v_integrate_parts} shows that 
\begin{equation} \label{laplacian_unshifted}
\norm{P_{\varphi_{\varepsilon}} v}^2 \geq \frac{c_0 h^2}{4 \varepsilon} \norm{v}_{H^2_{\text{scl}}}^2.
\end{equation}

Finally, we need to shift \eqref{laplacian_unshifted} to a different Sobolev index to prove \eqref{laplacian_estimate}. Let $\Omega \subset \subset \tilde{\Omega}$ with $\varphi$ a limiting Carleman weight in $\tilde{\Omega}$, and let $\chi \in C_c^{\infty}(\tilde{\Omega})$ with $\chi = 1$ near $\closure{\Omega}$. If $u \in C_c^{\infty}(\Omega)$, we have the pseudolocal estimate 
\begin{equation*}
\norm{(1-\chi) \br{hD}^t u}_{H^{\alpha}_{\text{scl}}} \leq C_M h^M \norm{u}_{H^{\beta}_{\text{scl}}}, \quad \text{for any } \alpha,\beta,t,M.
\end{equation*}
This and \eqref{laplacian_unshifted} imply 
\begin{align*}
h \norm{\br{hD}^{s+2}u} &= h \norm{\br{hD}^s u}_{H^2_{\text{scl}}} \\
 &\leq h \norm{\chi \br{hD}^s u}_{H^2_{\text{scl}}} + h \norm{(1-\chi) \br{hD}^s u}_{H^2_{\text{scl}}} \\
 &\lesssim \sqrt{\varepsilon} \norm{P_{\varphi_{\varepsilon}} (\chi \br{hD}^s u)} + h^2 \norm{\br{hD}^{s+2} u}.
\end{align*}
By the pseudolocal property we have 
\begin{equation*}
\norm{[P_{\varphi_{\varepsilon}}, \chi] \br{hD}^s u} \leq C_M h^M \norm{\br{hD}^{s+2} u}, \quad \text{for any } M,
\end{equation*}
and since $\chi [P_{\varphi_{\varepsilon}}, \br{hD}^s] = h \mOp_h(S^s)$, we have for $h \ll \varepsilon \ll 1$ that 
\begin{equation*}
\norm{\chi [P_{\varphi_{\varepsilon}}, \br{hD}^s] u} \lesssim h \norm{\br{hD}^s u}.
\end{equation*}
One can check that all constants here can be chosen independent of $\varepsilon$. From these estimates we obtain \eqref{laplacian_estimate}.
\end{proof}

Our first estimate for Dirac follows immediately.

\begin{lemma} \label{carleman_estimate}
Let $V \in L^{\infty}(\Omega)^{N \times N}$ and let $\varphi$ be a limiting Carleman weight for the Laplacian. If $-1 \leq s \leq 0$, then for $h$ small one has 
\begin{equation*}
h \norm{u}_{H^{s+1}_{\text{scl}}} \lesssim \norm{e^{\varphi/h}(P_0(hD) + hV) e^{-\varphi/h} u}_{H^s_{\text{scl}}}
\end{equation*}
for all $u \in C_c^{\infty}(\Omega)^N$.
\end{lemma}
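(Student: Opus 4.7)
The plan is to leverage the identity $P_0(hD)^2 = -h^2\Delta\cdot I_N$ to deduce the first-order Carleman estimate directly from the second-order estimate \eqref{laplacian_estimate}. Conjugation preserves composition, so $P_{0,\varphi_\eps}^2 = e^{\varphi_\eps/h}(-h^2\Delta)I_N e^{-\varphi_\eps/h}$, where $P_{0,\varphi_\eps} = e^{\varphi_\eps/h}P_0(hD)e^{-\varphi_\eps/h}$. Applying the preceding lemma with $s$ replaced by $s-1$ yields
\[
\frac{h}{\sqrt{\eps}}\norm{u}_{H^{s+1}_{\text{scl}}}\lesssim \norm{P_{0,\varphi_\eps}^2 u}_{H^{s-1}_{\text{scl}}}.
\]

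Since $P_0$ is linear in $\xi$, one has $P_{0,\varphi_\eps} = P_0(hD) + i(1+h\varphi/\eps)P_0(\nabla\varphi)$, a first-order semiclassical operator whose coefficients are bounded uniformly for $h\ll\eps$, so that $\norm{P_{0,\varphi_\eps}g}_{H^{t-1}_{\text{scl}}}\lesssim \norm{g}_{H^t_{\text{scl}}}$. Taking $g = P_{0,\varphi_\eps}u$ gives $\norm{P_{0,\varphi_\eps}^2 u}_{H^{s-1}_{\text{scl}}}\lesssim \norm{P_{0,\varphi_\eps}u}_{H^s_{\text{scl}}}$. Writing $P_{0,\varphi_\eps}u = (P_{0,\varphi_\eps}+hV)u - hVu$ and using that multiplication by $V\in L^{\infty}$ is bounded on $H^s_{\text{scl}}$ for $-1\le s\le 0$, the $V$-term contributes an error $Ch\norm{u}_{H^{s+1}_{\text{scl}}}$. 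Fixing $\eps$ once and for all small enough to dominate $C$ absorbs this error on the left, giving
\[
h\norm{u}_{H^{s+1}_{\text{scl}}}\lesssim \norm{(P_{0,\varphi_\eps}+hV)u}_{H^s_{\text{scl}}}.
\]

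To return to the original weight $\varphi$, I would use that $\varphi_\eps - \varphi = (h/\eps)\varphi^2/2$, so $e^{(\varphi_\eps-\varphi)/h} = e^{\varphi^2/(2\eps)}$ is an $h$-independent smooth function on $\closure{\Omega}$ bounded above and below. Substituting $w = e^{\varphi^2/(2\eps)}u$ into the estimate just derived and noting $(P_{0,\varphi_\eps}+hV)w = e^{\varphi^2/(2\eps)}(P_{0,\varphi}+hV)u$ produces the claimed estimate once the multiplication-by-$e^{\pm\varphi^2/(2\eps)}$ factors are absorbed; these are bounded on $H^s_{\text{scl}}$ uniformly in $h$ for $s\in[-1,0]$.

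The only real obstacle is coordinating the two small parameters: \eqref{laplacian_estimate} only gains $h/\sqrt{\eps}$, while the $L^{\infty}$-perturbation $hV$ introduces an $O(h)$ error, so $\eps$ must be fixed first, small enough in terms of $\norm{V}_{L^{\infty}}$ and the constants from \eqref{laplacian_estimate}, before taking $h\to 0$. Beyond this the argument is routine bookkeeping in semiclassical Sobolev spaces, the only additional inputs being the mapping property of $P_{0,\varphi_\eps}$ and the boundedness of $L^{\infty}$ and smooth bounded multipliers on $H^s_{\text{scl}}$ for $s\in[-1,0]$.
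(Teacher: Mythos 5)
Your argument is correct and is essentially the paper's own proof: square the conjugated Dirac operator to invoke the Laplacian estimate at index $s-1$, use that $\br{hD}^{-1}P_{0,\varphi_\eps}$ is of order $0$ to drop to $\norm{P_{0,\varphi_\eps}u}_{H^s_{\text{scl}}}$, absorb the $hV$ perturbation by fixing $\eps$ small, and undo the convexification via the bounded factor $e^{\varphi^2/2\eps}$. One small correction: multiplication by a general $V\in L^{\infty}$ is \emph{not} bounded on $H^s_{\text{scl}}$ for $s<0$; what you need (and what the restriction $-1\le s\le 0$ is for) is the chain $\norm{Vu}_{H^s_{\text{scl}}}\le\norm{Vu}_{L^2}\le\norm{V}_{L^{\infty}}\norm{u}_{L^2}\le\norm{V}_{L^{\infty}}\norm{u}_{H^{s+1}_{\text{scl}}}$, which delivers exactly the $Ch\norm{u}_{H^{s+1}_{\text{scl}}}$ error you claim.
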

\begin{proof}
Consider first $P_{0,\varphi_{\varepsilon}} = e^{\varphi_{\varepsilon}/h} P_0(hD) e^{-\varphi_{\varepsilon}/h}$. Now \eqref{laplacian_estimate} implies 
\begin{equation} \label{carleman_freedirac_convexified}
\frac{h}{\sqrt{\varepsilon}} \norm{u}_{H^{s+1}_{\text{scl}}} \leq C \norm{P_{0,\varphi_{\varepsilon}}^2 u}_{H^{s-1}_{\mathrm{scl}}} = C \norm{\br{hD}^{-1} P_{0,\varphi_{\varepsilon}} P_{0,\varphi_{\varepsilon}} u}_{H^s_{\text{scl}}} \leq C \norm{P_{0,\varphi_{\varepsilon}} u}_{H^s_{\text{scl}}}
\end{equation}
since $\br{hD}^{-1} P_{0,\varphi_{\varepsilon}}$ is of order $0$. We add the perturbation $h V$ and use the estimate $\norm{Vu}_{H^s_{\text{scl}}} \leq \norm{Vu} \leq \norm{V}_{L^{\infty}} \norm{u} \leq \norm{V}_{L^{\infty}} \norm{u}_{H^{s+1}_{\text{scl}}}$ to get 
\begin{equation*}
\frac{h}{\sqrt{\varepsilon}} \norm{u}_{H^{s+1}_{\text{scl}}} \leq C(\norm{(P_{0,\varphi_{\varepsilon}} + hV)u}_{H^s_{\text{scl}}} + h \norm{V}_{L^{\infty}} \norm{u}_{H^{s+1}_{\text{scl}}}).
\end{equation*}
Choosing $\varepsilon$ small enough we may absorb the last term to the left hand side. Since $P_{0,\varphi_{\varepsilon}} + hV = e^{\varphi^2/2\varepsilon} e^{\varphi/h}(P_0(hD) + hV) e^{-\varphi/h} e^{-\varphi^2/2\varepsilon}$, we have the desired estimate.
\end{proof}

By using duality and the Hahn-Banach theorem in a standard way, one can convert this estimate (the case $s=-1$) into a solvability result.

\begin{prop} \label{prop:inhomogeneous_eq_l2}
Let $V \in L^{\infty}(\Omega)^{N \times N}$ and let $\varphi$ be a limiting Carleman weight for the Laplacian. If $h$ is small, then for any $f \in L^2(\Omega)^N$ there is a solution $u \in H^1(\Omega)^N$ of the equation 
\begin{equation} \label{inhomogeneous_eq}
e^{\varphi/h}(P_0(hD) + hV) e^{-\varphi/h} u = f \quad \text{in } \Omega,
\end{equation}
which satisfies $h \norm{u}_{H^1_{\text{scl}}} \lesssim \norm{f}$.
\end{prop}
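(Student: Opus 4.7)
The plan is to apply the standard Hahn--Banach duality argument that converts a Carleman-type lower bound into a solvability statement. The key ingredient is Lemma \ref{carleman_estimate} with $s=-1$, applied to the formal adjoint of $P_{\varphi} := e^{\varphi/h}(P_0(hD)+hV)e^{-\varphi/h}$. The property of being a limiting Carleman weight for the Laplacian is preserved under $\varphi \mapsto -\varphi$ (the symbols $a$ and $b$ associated with $-\varphi$ agree with those of $\varphi$ up to signs, leaving the characteristic set and the condition $\{a,b\}=0$ there unchanged), and the $L^2$-formal adjoint is
\begin{equation*}
P_{\varphi}^* = e^{-\varphi/h}(P_0(hD) + hV^*)e^{\varphi/h}.
\end{equation*}
Thus Lemma \ref{carleman_estimate} applied with $-\varphi$ and $V^*$ in place of $\varphi,V$ yields the dual estimate
\begin{equation*}
h \norm{v} \lesssim \norm{P_{\varphi}^* v}_{H^{-1}_{\text{scl}}}, \qquad v \in C_c^{\infty}(\Omega)^N.
\end{equation*}

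Next I would define, on the subspace $E = \{P_{\varphi}^* v : v \in C_c^{\infty}(\Omega)^N\}$ of $H^{-1}_{\text{scl}}(\mR^n)^N$ (after extending test functions by zero), the antilinear functional $L(P_{\varphi}^* v) := (f\,|\,v)_{L^2(\Omega)}$. The dual Carleman estimate guarantees that the map $v \mapsto P_{\varphi}^* v$ is injective on test functions, so $L$ is well defined, and Cauchy--Schwarz together with the estimate gives
\begin{equation*}
\abs{L(P_{\varphi}^* v)} \leq \norm{f}\,\norm{v} \leq \frac{C}{h}\,\norm{f}\,\norm{P_{\varphi}^* v}_{H^{-1}_{\text{scl}}}.
\end{equation*}
The Hahn--Banach theorem then extends $L$ to a bounded antilinear functional on all of $H^{-1}_{\text{scl}}(\mR^n)^N$ with the same operator norm.

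Using the duality between $H^{-1}_{\text{scl}}$ and $H^{1}_{\text{scl}}$ via the $L^2$ pairing, this extension is represented by a unique $u \in H^1_{\text{scl}}(\mR^n)^N$ satisfying $h\norm{u}_{H^1_{\text{scl}}} \lesssim \norm{f}$. Testing against $v \in C_c^{\infty}(\Omega)^N$ then gives
\begin{equation*}
(f\,|\,v)_{L^2(\Omega)} = L(P_{\varphi}^* v) = (P_{\varphi}^* v \,|\, u) = (v \,|\, P_{\varphi} u),
\end{equation*}
where the last equality is the definition of the distributional adjoint and is justified because $u \in H^1$ makes $P_0(hD)u$ a legitimate first-order distribution. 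Since this holds for every test function $v$, the equation $P_{\varphi} u = f$ is satisfied distributionally in $\Omega$, hence in $L^2(\Omega)$ because $f \in L^2$. Restriction of $u$ to $\Omega$ produces the solution claimed.

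The steps above are essentially bookkeeping; the only mildly delicate points are keeping the antilinearity and complex-conjugation conventions of the $L^2$ pairing consistent between the functional $L$ and the representing element $u$, and verifying that the Hahn--Banach extension is genuinely represented by an element of $H^1_{\text{scl}}$ rather than a larger space. Both are standard Hilbert-space facts, so no real obstacle is expected: all the hard analytic work has already been done in Lemma \ref{carleman_estimate}.
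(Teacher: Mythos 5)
Your argument is correct and is exactly the route the paper gestures at but does not write out: apply the $s=-1$ case of Lemma \ref{carleman_estimate} to the adjoint $P_\varphi^* = e^{-\varphi/h}(P_0(hD)+hV^*)e^{\varphi/h}$ (using that $-\varphi$ is also a limiting Carleman weight and $V^*\in L^\infty$), then run the standard Hahn--Banach duality argument with the $H^{-1}_{\text{scl}}$/$H^{1}_{\text{scl}}$ pairing to produce $u$ with $h\norm{u}_{H^1_{\text{scl}}}\lesssim\norm{f}$. The details you supply (well-definedness of $L$ from injectivity on test functions, the antilinearity bookkeeping, the representation of the extension by an element of $H^1_{\text{scl}}$) are all correct.
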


In \cite{ksu} the partial data results for inverse problems were based on Carleman estimates with boundary terms, that is, estimates proved for functions whose support may extend up to the boundary. The above proof of the Carleman estimate for Dirac involved symbol calculus and estimates shifted to a different Sobolev index, which are problematic when applied to functions which are not compactly supported.

We will present another proof of a Carleman estimate for Dirac, which involves only integration by parts and gives an estimate with boundary terms. The estimate is weaker than the earlier ones, and works for $V = 0$. Following \cite{ksu}, we try to prove the estimate with boundary terms by writing the conjugated Dirac $P_{0,\varphi} = e^{\varphi/h} P_0(hD) e^{-\varphi/h}$ as $A+iB$ where $A$ and $B$ are self-adjoint, and by computing 
\begin{align*}
\norm{P_{0,\varphi} u}^2 &= ((A+iB)u|(A+iB)u) \\
 &= \norm{Au}^2 + \norm{Bu}^2 + (i[A,B]u|u) + \text{boundary terms}.
\end{align*}
In \cite{ksu}, which considered scalar operators, it was essential that the principal symbol $i(ab-ba)$ of $i[A,B]$ vanishes. Here however $a = P_0(\xi)$ and $b = P_0(\nabla \varphi)$ are matrices, and the principal symbol does not vanish. A part of it does vanish however: if $v = \nabla \varphi/\abs{\nabla \varphi}$ and $a = a_{\parallel} + a_{\perp}$, where $a_{\parallel} = P_0(\xi_{\parallel})$ with $\xi_{\parallel} = (\xi \cdot v)v$, then 
\begin{equation*}
a_{\parallel} b - b a_{\parallel} = (\xi \cdot \nabla \varphi) (P_0(v)^2 - P_0(v)^2) = 0.
\end{equation*}
This idea motivates the following proof. Here let $(u|v)_{\partial \Omega} = \int_{\partial \Omega} u \cdot \bar{v} \,dS$ and $\partial \Omega_{\pm} = \{ x \in \partial \Omega \,;\, \pm \nabla \varphi(x) \cdot \nu(x) > 0\}$.

\begin{prop} \label{prop:carleman_linear}
Let $\varphi(x) = \alpha \cdot x$ where $\alpha \in \mR^n$, $\abs{\alpha} = 1$. Let $h$ be sufficiently small. Then for any $u \in C^{\infty}(\closure{\Omega})^N$, one has the Carleman estimate 
\begin{multline*}
- h((\partial_{\nu} \varphi)u|u)_{\partial \Omega_{-}} \leq \norm{e^{\varphi/h} P_0(hD) e^{-\varphi/h} u}^2 + h((\partial_{\nu} \varphi)u|u)_{\partial \Omega_{+}} \\
 - ih (A_{\parallel} u|P_0(\nu_{\perp}) u)_{\partial \Omega} - ih (A_{\perp} u|P_0(\nu_{\parallel}) u)_{\partial \Omega},
\end{multline*}
where $A_{\parallel} = P_0(\alpha) \alpha \cdot hD$, $\nu_{\parallel} = (\alpha \cdot \nu)\alpha$, and $A_{\perp} = P_0(hD)-A_{\parallel}$, $\nu_{\perp} = \nu - \nu_{\parallel}$.
\end{prop}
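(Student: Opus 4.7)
The plan is to exploit the factorization of the conjugated Dirac operator. For the linear weight $\varphi(x) = \alpha \cdot x$ with $\abs{\alpha}=1$, conjugation by $e^{\varphi/h}$ shifts $hD$ by $i\alpha$, so $P_{0,\varphi} := e^{\varphi/h} P_0(hD) e^{-\varphi/h} = P_0(hD + i\alpha) = A + iB$ with $A = P_0(hD)$ and $B = P_0(\alpha)$. The matrix $B$ is Hermitian and $B^2 = \abs{\alpha}^2 I_N = I_N$, so $\norm{Bu} = \norm{u}$ and
\[
\norm{P_{0,\varphi} u}^2 = \norm{Au}^2 + \norm{u}^2 + 2\im(Au|Bu).
\]
Following the decomposition motivating the proposition, I would split $A = A_\parallel + A_\perp$ with $A_\parallel = B(\alpha \cdot hD) = BL$ where $L := \alpha \cdot hD$. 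Since $B^2 = I_N$ and $L$ is scalar, $(A_\parallel u|Bu) = (Lu|u)$, and the standard scalar integration by parts for $L$ yields $2\im(Lu|u) = -h((\partial_\nu \varphi) u | u)_{\partial\Omega}$. Hence
\[
\norm{P_{0,\varphi} u}^2 = \norm{Au}^2 + \norm{u}^2 - h((\partial_\nu\varphi) u|u)_{\partial\Omega} + 2\im(A_\perp u|Bu).
\]

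The core of the argument is the algebraic identity
\[
ih(A_\parallel u | P_0(\nu_\perp) u)_{\partial\Omega} + ih(A_\perp u | P_0(\nu_\parallel) u)_{\partial\Omega} = 2\re(A_\parallel u | A_\perp u),
\]
where the inner products on the right are in $L^2(\Omega)$. For the first cross term, Hermiticity of $B$ and the anticommutation $BP_0(\nu_\perp) = -P_0(\nu_\perp) B$ (a consequence of $\{P_0(\alpha), P_0(\nu_\perp)\} = 2(\alpha \cdot \nu_\perp) I_N = 0$) convert $(A_\parallel u|P_0(\nu_\perp) u)_{\partial\Omega}$ into $-(P_0(\nu_\perp) Lu|Bu)_{\partial\Omega}$. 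The IBP identity $(v|A_\perp w) = (A_\perp v|w) + ih(P_0(\nu_\perp) v|w)_{\partial\Omega}$, applied with $v=Lu$ and $w=Bu$ and combined with the commutation $LA_\perp = A_\perp L$ and the anticommutation $A_\perp B = -BA_\perp$, gives $ih(A_\parallel u|P_0(\nu_\perp) u)_{\partial\Omega} = (A_\parallel u|A_\perp u) + (LA_\perp u|Bu)$. For the second cross term, writing $P_0(\nu_\parallel) = (\alpha\cdot\nu)B$ and applying the scalar IBP $(v|Lw) - (Lv|w) = ih((\alpha\cdot\nu) v|w)_{\partial\Omega}$ with $v = A_\perp u$, $w = Bu$ yields $ih(A_\perp u|P_0(\nu_\parallel) u)_{\partial\Omega} = (A_\perp u|A_\parallel u) - (LA_\perp u|Bu)$. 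The $(LA_\perp u|Bu)$ terms cancel and the identity follows.

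Using $\norm{Au}^2 = \norm{A_\parallel u}^2 + \norm{A_\perp u}^2 + 2\re(A_\parallel u|A_\perp u)$, the proposition is then equivalent to
\[
\norm{A_\parallel u}^2 + \norm{A_\perp u}^2 + \norm{u}^2 + 2\im(A_\perp u|Bu) \geq 0,
\]
which is immediate from Cauchy--Schwarz and the AM--GM inequality: $2\abs{\im(A_\perp u|Bu)} \leq 2\norm{A_\perp u}\norm{Bu} = 2\norm{A_\perp u}\norm{u} \leq \norm{A_\perp u}^2 + \norm{u}^2$, leaving the nonnegative lower bound $\norm{A_\parallel u}^2 + (\norm{A_\perp u} - \norm{u})^2$. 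The main delicate step is the algebraic manipulation in the previous paragraph; the sign pattern in the anticommutation $\{P_0(\alpha), P_0(\nu_\perp)\} = 0$ is precisely what produces the cancellation of $(LA_\perp u|Bu)$ and selects the somewhat unusual cross pairing $A_\parallel \leftrightarrow P_0(\nu_\perp)$, $A_\perp \leftrightarrow P_0(\nu_\parallel)$ appearing in the statement rather than a diagonal one.
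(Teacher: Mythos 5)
Your proof is correct and follows essentially the same route as the paper: write $P_{0,\varphi}=A+iB$, split $A=A_{\parallel}+A_{\perp}$, integrate by parts, and use the anticommutation relation to cancel the interior cross term $A_{\parallel}A_{\perp}+A_{\perp}A_{\parallel}=0$, leaving exactly the stated boundary terms. The only cosmetic difference is at the end, where the paper recognizes the remaining nonnegative quantity as $\norm{A_{\parallel}u}^2+\norm{(A_{\perp}+iB)u}^2$ while you bound the same expression from below by $\norm{A_{\parallel}u}^2+(\norm{A_{\perp}u}-\norm{u})^2$ via Cauchy--Schwarz.
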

\begin{proof}
Let first $\varphi \in C^{\infty}(\closure{\Omega} ; \mR)$ and $\nabla \varphi \neq 0$ on $\closure{\Omega}$. Let $P_{0,\varphi} = A + iB$ be as above, so that $A = P_0(hD)$ and $B = P_0(\nabla \varphi)$. Let $v = \nabla \varphi/\abs{\nabla \varphi} = (v_1, \ldots, v_n)$. We decompose $A$ as $A = A_{\parallel} + A_{\perp}$, where
\begin{equation*}
A_{\parallel} = \sum_{j=1}^n P_0(v)v_j hD_j, \quad A_{\perp} = P_0(hD) - \sum_{j=1}^n P_0(v)v_j hD_j.
\end{equation*}
It holds that 
\begin{multline*}
\norm{P_{0,\varphi} u}^2 = ((A+iB)u|(A+iB)u) = \norm{A_{\parallel} u}^2 + \norm{(A_{\perp}+iB)u}^2 \\
 + (A_{\parallel} u|A_{\perp} u) + (A_{\perp} u|A_{\parallel} u) + i(Bu|A_{\parallel} u) - i(A_{\parallel} u|Bu).
\end{multline*}
Use $P_0(v)^2 = I$ and integrate by parts to obtain 
\begin{align*}
 & i(Bu|A_{\parallel} u) - i(A_{\parallel} u|Bu) \\
 &= \sum_{j=1}^n [ i(P_0(\nabla \varphi)u|P_0(v) v_j hD_j u) - i(P_0(v) v_j hD_j u|P_0(\nabla \varphi)u) ] \\
 &= \sum_{j=1}^n [ i(u|\varphi_{x_j} hD_j u) - i(\varphi_{x_j} hD_j u|u) ] \\
 &= -h((\partial_{\nu} \varphi)u|u)_{\partial \Omega} + h((\Delta \varphi)u|u).
\end{align*}
For the terms involving $A_{\parallel}$ and $A_{\perp}$, one has 
\begin{align*}
 & (A_{\parallel} u|A_{\perp} u) + (A_{\perp} u|A_{\parallel} u) \\
 &= ((A_{\perp}^* A_{\parallel} + A_{\parallel}^* A_{\perp})u|u) + ih (A_{\parallel} u|P_0(\nu_{\perp}) u)_{\partial \Omega} + ih (A_{\perp} u|P_0(\nu_{\parallel}) u)_{\partial \Omega},
\end{align*}
where $\nu_{\parallel} = (\nu \cdot v)v$ and $\nu_{\perp} = \nu - \nu_{\parallel}$. So far we have not used any special properties of $\varphi$.

Now assume that $\varphi(x) = \alpha \cdot x$ is the linear weight and choose coordinates so that $\varphi(x) = x_1$. Then $v = e_1$ is a constant vector, and 
\begin{equation*}
A_{\parallel} = P_0(e_1)hD_1, \quad A_{\perp} = \sum_{j=2}^n P_0(e_j)hD_j.
\end{equation*}
We have $A_{\parallel}^* = A_{\parallel}$ and $A_{\perp}^* = A_{\perp}$, and 
\begin{equation*}
A_{\perp} A_{\parallel} + A_{\parallel} A_{\perp} = \sum_{j=2}^n (P_0(e_j)P_0(e_1) + P_0(e_1)P_0(e_j)) hD_1 hD_j.
\end{equation*}
This vanishes by \eqref{pzero_zeta_xi}.

From the above, we have for $u \in C^{\infty}(\closure{\Omega})^N$ the Carleman estimate 
\begin{multline*}
\norm{P_{0,\varphi} u}^2 = \norm{A_{\parallel} u}^2 + \norm{(A_{\perp}+iB)u}^2 \\
 - h((\partial_{\nu} \varphi)u|u)_{\partial \Omega} + ih (A_{\parallel} u|P_0(\nu_{\perp}) u)_{\partial \Omega} + ih (A_{\perp} u|P_0(\nu_{\parallel}) u)_{\partial \Omega}.
\end{multline*}
The desired estimate follows.
\end{proof}

\begin{remark}
The above proof can be carried out for the convexified linear weight $\varphi_{\varepsilon} = \varphi + \frac{h}{\varepsilon} \frac{\varphi^2}{2}$ where $\varphi(x) = \alpha \cdot x$, since $v$ is a constant vector also in that case. In this way one can include a $L^{\infty}$ potential in the Carleman estimate for $\varphi_{\varepsilon}$, as well as the term $h^2 \norm{u}^2$ on the left. However, due to the boundary terms involving $A_{\parallel}$ and $A_{\perp}$, it is not clear how to go back from $\varphi_{\varepsilon}$ to $\varphi$ in this case.
\end{remark}

\begin{remark}
In the proof of Proposition \ref{prop:carleman_linear}, the quantity $\Delta \varphi$ appears and one might expect the condition $\Delta \varphi = 0$ to be related to limiting Carleman weights for Dirac. This can also be seen by writing $P_{0,\varphi} = e^{\varphi/h} P_0(hD) e^{-\varphi/h}$ as $A + iB$ where $A = P_0(hD)$ and $B = P_0(\nabla \varphi)$, and by noting that 
\begin{equation*}
\norm{P_{0,\varphi} u}^2 = ((A-iB)(A+iB)u|u)
\end{equation*}
for test functions $u$, where $(A-iB)(A+iB)$ has full symbol 
\begin{equation*}
(a-ib)(a+ib) + \frac{h}{2i}\{a-ib,a+ib\}.
\end{equation*}
Here $\{c,d\}$ is the matrix symbol whose $(j,k)$th element is $\sum_{l=1}^n \{c_{jl}, d_{lk}\}$. A computation shows that $\frac{h}{2i}\{a-ib,a+ib\} = h (\Delta \varphi) I_N$, and thus the symbol of $(A-iB)(A+iB)$ is nonnegative definite if $\Delta \varphi = 0$. It is not clear to us if one can exploit harmonicity in proving Carleman estimates for Dirac in $\mR^n$. There are interesting recent results related to this approach and the semiclassical Fefferman-Phong inequality for systems in the work \cite{parmeggiani}.
\end{remark}

\section{CGO solutions, smooth case}

In this section we wish to construct complex geometrical optics solutions to $\mathcal{L}_V u = 0$, where $\mathcal{L}_V$ is given in \eqref{lv_definition}, \eqref{v_definition}. It will be convenient to consider $4 \times 4$ matrix solutions (that is, every column of the matrix is a solution) of the form 
\begin{equation} \label{wkb_ansatz}
U = e^{-\rho/h}(C_0 + h C_1 + \ldots + h^{N-1} C_{N-1} + h^{N-1} R_N).
\end{equation}
This is a WKB ansatz for the solution, where $\rho = \varphi + i\psi$ is a complex phase function with $\varphi$ a Carleman weight, $C_j$ are matrices which correspond to amplitudes, and $R_N$ is a correction term. These are called complex geometrical optics solutions because the phase is complex.

We will give a construction for smooth coefficients (Proposition \ref{prop:cgo_construction_smooth}), in which case there is an arbitrarily long asymptotic expansion of the form \eqref{wkb_ansatz} where the successive terms have increasing decay in $h$. This can be achieved if $\varphi$ is a limiting Carleman weight for the Laplacian. In the next section we consider Lipschitz coefficients (Proposition \ref{prop:cgo_construction_nonsmooth}), in which case $\varphi$ is the linear weight for simplicity. Then the successive terms in \eqref{wkb_ansatz} will have only a limited decay in $h$, but we compute sufficiently many terms to be able to prove a uniqueness result for the inverse problem.

Suppose that $A, q_{\pm} \in C^{\infty}(\closure{\Omega})$. Writing $P_{\rho} = e^{\rho/h} (P_0(hD)+hV) e^{-\rho/h} = iP_0(\nabla \rho) + h \mathcal{L}_V$, inserting the ansatz \eqref{wkb_ansatz} in the equation $\mathcal{L}_V U = 0$, and collecting like powers of $h$, results in the equations 
\begin{align*}
iP_0(\nabla \rho) C_0 &= 0,  \\
iP_0(\nabla \rho) C_1 &= - \mathcal{L}_V C_0,  \\
 & \vdots \\
iP_0(\nabla \rho) C_{N-1} &= - \mathcal{L}_V C_{N-2}, \\
P_{\rho} R_N &= - h \mathcal{L}_V C_{N-1}.
\end{align*}

We now give a procedure for solving these equations. The first equation implies that the kernel of $P_0(\nabla \rho)$ should be nontrivial. The same applies to the kernel of $P_0(\nabla \rho)^2 = (\nabla \rho)^2 I_4$, which gives the condition 
\begin{equation} \label{eikonal_eq}
(\nabla \rho)^2 = 0.
\end{equation}
This is an eikonal equation for the complex phase. If $\varphi$ is given, the equations for $\psi$ become 
\begin{equation} \label{eikonal_eq_psi}
\abs{\nabla \psi}^2 = \abs{\nabla \varphi}^2, \quad \nabla \varphi \cdot \nabla \psi = 0.
\end{equation}
Assume for the moment that \eqref{eikonal_eq_psi} is solvable. From \eqref{eikonal_eq} we get $\text{ker}\,P_0(\nabla \rho) = \text{im}\,P_0(\nabla \rho)$, since the image is contained in the kernel and $\text{rank}(P_0(\nabla \rho)) = 2$. We choose $C_0 = P_0(\nabla \rho) \widetilde{C}_0$ where $\widetilde{C}_0$ is a smooth matrix to be determined.

Moving on to the second equation, we use the commutator identity 
\begin{equation*}
\mathcal{L}_V P_0(\nabla \rho) = M_{A} + P_0(\nabla \rho)(-P_0(D+A) + Q_I),
\end{equation*}
where $Q_I = \left( \begin{smallmatrix} q_- I_2 & 0 \\ 0 & q_+ I_2 \end{smallmatrix} \right)$ and $M_A$ is the transport operator 
\begin{equation*}
M_{A} = (2(\nabla \rho \cdot (D + A)) + \frac{1}{i} \Delta \rho) I_4.
\end{equation*}
The equation for $C_1$ then reads 
\begin{equation*}
P_0(\nabla \rho) C_1 = P_0(\nabla \rho) \frac{1}{i} (P_0(D+A)-Q_I) \widetilde{C}_0 + i M_A \widetilde{C}_0.
\end{equation*}
A solution is given by $C_1 = \frac{1}{i} (P_0(D+A)-Q_I) \widetilde{C}_0 + P_0(\nabla \rho) \widetilde{C}_1$, for some $\widetilde{C}_1$ to be determined, provided that 
\begin{equation} \label{transport_eq}
M_A \widetilde{C}_0 = 0.
\end{equation}
Under certain conditions which are stated below, the transport equation \eqref{transport_eq} has a smooth solution $\widetilde{C}_0$.

For the third equation we use that 
\begin{equation} \label{secondorder_identity}
(P_0(D+A) + Q)(P_0(D+A) - Q_I) = H_{A,W},
\end{equation}
where $H_{A,W} = (D+A)^2 I_4 + W$ is the magnetic Schr\"odinger operator, with 
\begin{equation*}
W = \left( \begin{array}{cc} \sigma \cdot (\nabla \times A) - q_+ q_- I_2 & -\sigma \cdot Dq_+ \\ -\sigma \cdot Dq_- & \sigma \cdot (\nabla \times A) - q_+ q_- I_2 \end{array} \right).
\end{equation*}
Then the equation for $C_2$ becomes 
\begin{equation*}
P_0(\nabla \rho) C_2 = H_{A,W} \widetilde{C}_0 + P_0(\nabla \rho) \frac{1}{i}(P_0(D+A) - Q_I) \widetilde{C}_1 + i M_A \widetilde{C}_1.
\end{equation*}
Choosing $\widetilde{C}_1$ as a solution of the transport equation $M_A \widetilde{C}_1 = i H_{A,W} \widetilde{C}_0$, we obtain $C_2$ as 
\begin{equation*}
C_2 = \frac{1}{i}(P_0(D+A) - Q_I) \widetilde{C}_1 + P_0(\nabla \rho) \widetilde{C}_2.
\end{equation*}
Continuing in this way we obtain smooth matrices $C_3$, \ldots, $C_{N-1}$. The equation for $h^{N-1} R_N$ may be solved by Proposition \ref{prop:inhomogeneous_eq_l2}, which ends the construction of solutions.

We still need to consider the solvability of the eikonal equation \eqref{eikonal_eq_psi} and transport equation \eqref{transport_eq}. As discussed in \cite{ksu}, these equations can be solved provided that $\varphi$ is a limiting Carleman weight for the Laplacian, under some geometric assumptions. Suppose that $\Omega \subset \subset \tilde{\Omega}$ and that $\varphi$ is a limiting Carleman weight in $\tilde{\Omega}$. Assume that 
\begin{quote}
$\Omega$ is contained in the union of integral curves of $\nabla \varphi$, all passing through the smooth hypersurface $G = \varphi^{-1}(C_0)$ in $\tilde{\Omega}$.
\end{quote}
Then one may solve the eikonal equation $\abs{\nabla \psi_0}^2 = \abs{\nabla \varphi}^2$ on $G$ by letting $\psi_0$ be the distance in the metric $\abs{\nabla \varphi}^2 e_0$ on $G$ to a point or hypersurface ($e_0$ is the induced Euclidean metric), chosen so that $\psi_0$ is smooth. The limiting Carleman condition implies that $\psi$, obtained from $\psi_0$ by extending it as constant along integral curves of $\nabla \varphi$, will solve the eikonal equation \eqref{eikonal_eq_psi}.

Given $\psi$ satisfying \eqref{eikonal_eq_psi}, one has $[\nabla \varphi, \nabla \psi] = c\nabla \varphi + d\nabla \psi$ by \cite{ksu} (see also \cite[Lemma 3.1]{knudsensalo}). It follows by \cite{duistermaathormander} that transport equations such as \eqref{transport_eq} are solvable, provided that the leaves of the foliation generated by $\nabla \varphi$ and $\nabla \psi$ are not contained in $\closure{\Omega}$.

If $\varphi$ is the linear weight or the logarithmic weight $\log\,\abs{x-x_0}$, with $x_0$ outside the convex hull of $\closure{\Omega}$, then these geometric conditions are satisfied and the equations \eqref{eikonal_eq_psi} and \eqref{transport_eq} may be solved explicitly (for the logarithmic weight see \cite{dksu}). Thus, the following result holds in particular for these choices of the weight $\varphi$.

\begin{prop} \label{prop:cgo_construction_smooth}
Let $\varphi$ be a limiting Carleman weight for the Laplacian in $\tilde{\Omega}$, where $\Omega \subset \subset \tilde{\Omega}$ and where the above conditions for solving \eqref{eikonal_eq_psi} and \eqref{transport_eq} are satisfied. If $A, q_{\pm} \in C^{\infty}(\closure{\Omega})$, then the equation $\mathcal{L}_V U = 0$ has a solution of the form \eqref{wkb_ansatz}, where 
\begin{eqnarray*}
 & C_0 = P_0(\nabla \rho) \widetilde{C}_0, \quad M_A \widetilde{C}_0 = 0, & \\
 & C_1 = \frac{1}{i} (P_0(D+A)-Q_I) \widetilde{C}_0 + P_0(\nabla \rho) \widetilde{C}_1, \quad M_A \widetilde{C}_1 = i H_{A,W} \widetilde{C}_0, \\
 & \vdots & \\
 & C_{N-1} = \frac{1}{i} (P_0(D+A)-Q_I) \widetilde{C}_{N-2} + P_0(\nabla \rho) \widetilde{C}_{N-1}, \quad \widetilde{C}_{N-1} \text{ smooth}, \\
 & \norm{C_j}_{W^{1,\infty}(\Omega)} \lesssim 1, \quad \norm{R_N} + h \norm{\nabla R_N} \lesssim 1. &
\end{eqnarray*}
\end{prop}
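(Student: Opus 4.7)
My plan is to carry out, in order, the three constructive steps already sketched before the statement: solve the eikonal equation \eqref{eikonal_eq_psi} for $\psi$; recursively solve the transport equations for the amplitudes $\widetilde{C}_0,\ldots,\widetilde{C}_{N-1}$; and obtain the remainder $R_N$ by applying Proposition \ref{prop:inhomogeneous_eq_l2} to the equation $P_\rho R_N = -h\mathcal{L}_V C_{N-1}$, after absorbing the imaginary part of $\rho$ into the unknown.

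First I would construct $\psi$: on the hypersurface $G=\varphi^{-1}(C_0)$ take $\psi_0$ to be the distance in the metric $|\nabla\varphi|^2 e_0$ from a suitably chosen point or transverse hypersurface, so that $\psi_0$ is smooth near $G\cap\overline{\Omega}$, and extend $\psi_0$ to $\tilde\Omega$ by making it constant along integral curves of $\nabla\varphi$. The limiting Carleman property of $\varphi$ then yields \eqref{eikonal_eq_psi}, hence $(\nabla\rho)^2 = 0$ for $\rho = \varphi+i\psi$. Since $P_0(\nabla\rho)^2=0$ and $P_0(\nabla\rho)$ has rank $2$, one has $\ker P_0(\nabla\rho)=\mathrm{im}\,P_0(\nabla\rho)$, so any amplitude of the form $P_0(\nabla\rho)\widetilde{C}$ automatically satisfies the leading equation $iP_0(\nabla\rho)C_0 = 0$.

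Next, the amplitudes are built inductively. I set $C_0 = P_0(\nabla\rho)\widetilde{C}_0$ where $\widetilde{C}_0$ is a smooth solution of $M_A\widetilde{C}_0=0$; solvability on $\overline{\Omega}$ follows from the identity $[\nabla\varphi,\nabla\psi]\in\mathrm{span}\{\nabla\varphi,\nabla\psi\}$ proved in \cite{ksu} and the hypothesis that no leaf of the foliation spanned by $\nabla\varphi,\nabla\psi$ is contained in $\overline{\Omega}$, which, via \cite{duistermaathormander}, allows prescription of smooth data on a transverse boundary hypersurface. For $j\geq 1$ I choose $C_j = \frac{1}{i}(P_0(D+A)-Q_I)\widetilde{C}_{j-1} + P_0(\nabla\rho)\widetilde{C}_j$; then, using the commutator identity $\mathcal{L}_V P_0(\nabla\rho) = M_A + P_0(\nabla\rho)(-P_0(D+A)+Q_I)$ together with the factorization \eqref{secondorder_identity}, the equation $iP_0(\nabla\rho)C_j = -\mathcal{L}_V C_{j-1}$ collapses, modulo terms in $\mathrm{im}\,P_0(\nabla\rho)$, to $M_A\widetilde{C}_j = iH_{A,W}\widetilde{C}_{j-1}$, which has a smooth solution by the same transport argument. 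Iterating $N-1$ times yields smooth $\widetilde{C}_0,\ldots,\widetilde{C}_{N-1}$, and the $W^{1,\infty}$ bounds follow from $h$-independence of the construction together with smoothness of $A,q_\pm,\rho$ on $\overline{\Omega}$.

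For the remainder I would substitute $R_N = e^{i\psi/h}w$, which converts $P_\rho R_N = -h\mathcal{L}_V C_{N-1}$ into $e^{\varphi/h}(P_0(hD)+hV)e^{-\varphi/h}w = -h e^{-i\psi/h}\mathcal{L}_V C_{N-1}$, whose right-hand side has $L^2$-norm $\lesssim h$ by the bounds on $C_{N-1}$. Proposition \ref{prop:inhomogeneous_eq_l2} then produces $w\in H^1(\Omega)^4$ with $\|w\|+h\|\nabla w\|\lesssim 1$, and undoing the rotation gives $\|R_N\|\lesssim 1$ and $h\|\nabla R_N\|\leq \|\nabla\psi\|_{L^\infty}\|w\|+h\|\nabla w\|\lesssim 1$. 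The main obstacle I anticipate is the bookkeeping in the second step: one must verify at every iteration that the right-hand side $iH_{A,W}\widetilde{C}_{j-1}$ remains smooth and that the geometric solvability of the transport equation carries through. This is a routine induction once the eikonal solution and commutator identity are in hand, but it is precisely where the smooth-coefficient hypothesis is used, and it is also the reason why the Lipschitz case treated in the next section requires stopping the expansion after finitely many explicit terms.
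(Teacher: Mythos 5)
Your proposal reproduces the paper's own argument: solve the eikonal equation for $\psi$ via the distance function on $G=\varphi^{-1}(C_0)$ and extend along integral curves of $\nabla\varphi$; set up the hierarchy $iP_0(\nabla\rho)C_j=-\mathcal{L}_V C_{j-1}$ and peel off $P_0(\nabla\rho)$ using the commutator identity $\mathcal{L}_V P_0(\nabla\rho)=M_A+P_0(\nabla\rho)(-P_0(D+A)+Q_I)$ and the factorization \eqref{secondorder_identity}, reducing at each step to a Cauchy--Riemann-type transport equation $M_A\widetilde{C}_j=iH_{A,W}\widetilde{C}_{j-1}$ solvable on $\closure{\Omega}$ by the Duistermaat--H\"ormander argument; then absorb the $e^{i\psi/h}$ oscillation into the remainder and invoke Proposition \ref{prop:inhomogeneous_eq_l2}. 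The only place you add detail beyond the paper's sketch is the explicit substitution $R_N=e^{i\psi/h}w$ to convert $P_\rho R_N=-h\mathcal{L}_V C_{N-1}$ into a problem with the real weight $\varphi$, together with the elementary verification that $\|R_N\|$ and $h\|\nabla R_N\|$ inherit $O(1)$ bounds from $\|w\|_{H^1_{\mathrm{scl}}}\lesssim 1$; this is exactly how the paper's appeal to Proposition \ref{prop:inhomogeneous_eq_l2} is meant to be read, so the approach is essentially identical.
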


\section{CGO solutions, Lipschitz case}

Here we construct solutions in the case of compactly supported Lipschitz continuous coefficients, $A, q_{\pm} \in W^{1,\infty}_c(\Omega)$. The construction will be carried out for the linear phase $\rho(x) = \zeta \cdot x$ where $\zeta \in \mC^3$, $\zeta^2 = 0$, and $\abs{\re\,\zeta} = \abs{\im\,\zeta}= 1$.

To deal with nonsmooth coefficients, we introduce mollifiers $\eta_{\varepsilon}(x) = \varepsilon^{-3} \eta(x/\varepsilon)$ where $\eta \in C^{\infty}_c(\mR^3)$ is supported in the unit ball and $0 \leq \eta \leq 1$, $\int \eta \,dx = 1$. Decompose $A = A^{\sharp} + A^{\flat}$ and $Q = Q^{\sharp} + Q^{\flat}$, where $A^{\sharp} = A \ast \eta_{\varepsilon}$ and $Q^{\sharp} = Q \ast \eta_{\varepsilon}$, with the special choice 
\begin{equation*}
\varepsilon = h^{\sigma}
\end{equation*}
where $\sigma > 0$ is small. Also write $V = V^{\sharp} + V^{\flat}$ where $V^{\sharp} = P_0(A^{\sharp}) + Q^{\sharp}$. We find a solution to $\mathcal{L}_{V} U = 0$ of the form 
\begin{equation*}
U = e^{-\rho/h}(C_0 + h C_1 + R),
\end{equation*}
where 
\begin{align*}
iP_0(\zeta) C_0 &= 0, \\
iP_0(\zeta) C_1 &= - \mathcal{L}_{V^{\sharp}} C_0, \\
P_{\rho} R &= - h^2 \mathcal{L}_{V^{\sharp}} C_1 - h V^{\flat}(C_0 + h C_1).
\end{align*}
We make the choices 
\begin{align*}
C_0 &= P_0(\zeta) e^{i\phi^{\sharp}}, \\
C_1 &= \frac{1}{i} (P_0(D+A^{\sharp}) - Q_I^{\sharp}) e^{i\phi^{\sharp}} I_4,
\end{align*}
where $\phi^{\sharp}$ is the solution to $\zeta \cdot (\nabla \phi^{\sharp} + A^{\sharp}) = 0$ given by 
\begin{equation*}
\phi^{\sharp} = (\zeta \cdot \nabla)^{-1}(-\zeta \cdot A^{\sharp}).
\end{equation*}
Here $(\zeta \cdot \nabla)^{-1}$ is the Cauchy transform 
\begin{equation*}
(\zeta \cdot \nabla)^{-1} f(x) = \frac{1}{2\pi} \int_{\mR^2} \frac{1}{y_1+iy_2} f(x-y_1 \re\,\zeta - y_2 \im\,\zeta) \,dy_1 \,dy_2.
\end{equation*}
The following result will be used to solve for the error term $R$. We will need the additional small parameter $\tilde{h}$ to prove some extra decay for $R$.

\begin{prop} \label{prop:inhomogeneous_eq_h1_tilde}
Let $\Omega \subseteq \mR^n$ be a bounded open set, let $P_0$ be a Dirac operator in $\mR^n$, and assume that $V \in W^{1,n} \cap L^{\infty}(\Omega)^{N \times N}$. Also let $\zeta \in \mC^n$, $\zeta^2 = 0$. If $h$, $\tilde{h}$ are small, then for any $f \in H^1(\Omega)^N$ there is a solution $u \in H^1(\Omega)^N$ of the equation 
\begin{equation*}
(P_0(hD+\zeta)+hV) u = f \qquad \text{in $\Omega$},
\end{equation*}
which satisfies 
\begin{equation*}
h (\norm{u} + \tilde{h} \norm{\nabla u}) \lesssim \norm{f} + \tilde{h} \norm{\nabla f}.
\end{equation*}
\end{prop}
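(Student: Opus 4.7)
The strategy mirrors the proof of Proposition \ref{prop:inhomogeneous_eq_l2}: establish a Carleman-type a priori estimate for the formal adjoint $L^* := P_0(hD+\bar\zeta)+hV^*$ and pass to existence by Hahn-Banach duality, the new ingredient being an $\tilde{h}$-weighted control on first derivatives. The baseline $L^2$ Carleman estimate $h\norm{v}\lesssim \norm{L^* v}$ for $v\in C_c^\infty(\Omega)^N$ follows directly from Lemma \ref{carleman_estimate}: unitary conjugation by $e^{i\psi/h}$ with $\psi(x) = \re(\bar\zeta)\cdot x$ absorbs the real part of $\bar\zeta$ into a real linear Laplacian-admissible weight, and this modulus-one factor preserves $L^2$ norms.

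To obtain the derivative control, I would commute $\partial_k$ past $L^*$: since $[L^*,\partial_k]=-h(\partial_k V^*)$ and $\partial_k v\in C_c^\infty(\Omega)^N$ whenever $v\in C_c^\infty(\Omega)^N$, applying the $L^2$ Carleman to $\partial_k v$ gives $h\norm{\partial_k v}\lesssim \norm{\partial_k L^* v}+h\norm{(\partial_k V^*)v}$. The hypothesis $V\in W^{1,n}$ is essential here: $\partial_k V\in L^n$ combined with the Sobolev embedding $\norm{v}_{L^{2n/(n-2)}}\lesssim \norm{\nabla v}_{L^2}$ (on the zero-extension of $v$ to $\mR^n$) bounds the last term by $h\norm{\nabla V}_{L^n}\norm{\nabla v}_{L^2}$, which is absorbed for $h$ small. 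Summing in $k$ and combining with the $L^2$ estimate yields the refined Carleman estimate
\[
h\bigl(\norm{v}+\tilde{h}\norm{\nabla v}\bigr)\lesssim \norm{L^* v}+\tilde{h}\norm{\nabla L^* v}, \qquad v\in C_c^\infty(\Omega)^N.
\]

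A direct Hahn-Banach argument on this primal estimate would place the solution $u$ in the negative-regularity space $H^{-1}_{\tilde{h}}$ rather than in $H^1_{\tilde{h}}$ (with $\norm{g}_{H^s_{\tilde{h}}} := \norm{\br{\tilde{h} D}^s g}$). To land $u$ in the correct space, I prove the companion dual Carleman estimate $h\norm{v}_{H^{-1}_{\tilde{h}}}\lesssim \norm{L^* v}_{H^{-1}_{\tilde{h}}}$ by commuting $\Phi := \br{\tilde{h} D}^{-1}$ through $L^*$: the constant-coefficient Fourier multiplier $P_0(hD+\bar\zeta)$ commutes exactly with $\Phi$, while the commutator $[\Phi, V^*]$ obeys $\norm{[\Phi, V^*]v}\lesssim \norm{\Phi v}$ uniformly in $\tilde{h}$, as a consequence of the fact that multiplication by $V^* \in W^{1,n}\cap L^\infty$ is bounded on $H^{\pm 1}_{\tilde{h}}(\mR^n)$ with constants independent of $\tilde{h}$ (via the same Sobolev/Hölder argument as above). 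A cutoff together with pseudolocality of $\Phi$ handles that $\Phi v$ is no longer compactly supported. Hahn-Banach on $H^{-1}_{\tilde{h}}$ followed by Riesz representation in its dual $H^1_{\tilde{h}}$ then produces the claimed $u$. The principal obstacle is this last dualization step — getting into $H^1_{\tilde{h}}$ rather than $H^{-1}_{\tilde{h}}$ — whose commutator analysis is precisely where the strengthened $W^{1,n}$ regularity of $V$ (beyond the $L^\infty$ used in Proposition \ref{prop:inhomogeneous_eq_l2}) becomes indispensable.
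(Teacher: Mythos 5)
Your operative argument --- proving the adjoint estimate $h\norm{\br{\tilde{h}D}^{-1}v}\lesssim\norm{\br{\tilde{h}D}^{-1}L^*v}$ and then applying Hahn--Banach so that the solution lands in the dual space $H^1_{\tilde{h}}$ --- is exactly the paper's proof: the paper establishes precisely the estimate \eqref{carleman_hminus1} and dualizes. Two remarks on the differences. First, the paper derives \eqref{carleman_hminus1} from the \emph{convexified} estimate \eqref{carleman_freedirac_convexified}, so $\nabla\rho_{\varepsilon}$ is not constant and one must control the commutator $[\zeta\cdot x,\br{\tilde{h}D}^{-1}]=\tilde{h}R\br{\tilde{h}D}^{-1}$; this is where smallness of $\tilde{h}$ is actually used. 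You instead exploit that $P_0(hD+\bar\zeta)$ is a Fourier multiplier commuting exactly with $\br{\tilde{h}D}^{-1}$, which is a legitimate shortcut if you start from the unconvexified statement of Lemma \ref{carleman_estimate} (with $V=0$) on a slightly larger domain and handle the cutoff errors by pseudolocality, as you indicate. Your justification of $\norm{\br{\tilde{h}D}^{-1}Vu}\leq C\norm{\br{\tilde{h}D}^{-1}u}$ via $\tilde{h}$-uniform boundedness of multiplication by $V\in W^{1,n}\cap L^{\infty}$ on $H^{\pm1}_{\tilde{h}}$ correctly fills in a step the paper merely asserts.

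Second, the preliminary ``primal'' estimate obtained by commuting $\partial_k$ through $L^*$ is both unnecessary (you discard it yourself in favor of the dual estimate) and flawed as written: after Sobolev embedding the error term is $h\norm{\nabla V}_{L^n}\norm{\nabla v}$, which carries the \emph{same} power of $h$ as the left-hand side $h\norm{\nabla v}$, so it cannot be ``absorbed for $h$ small.'' Absorption would require $\norm{\nabla V}_{L^n}$ itself to be small, or a splitting $\nabla V=G^{\sharp}+G^{\flat}$ with $G^{\sharp}\in L^{\infty}$ and $\norm{G^{\flat}}_{L^n}$ small. Since the final argument does not rely on this estimate, the error does not affect the validity of your proof, but the step should be deleted or repaired.
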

\begin{proof}
We wish to show the Carleman estimate 
\begin{equation} \label{carleman_hminus1}
h \norm{\langle \tilde{h}D \rangle^{-1} u}  \leq C \norm{\langle \tilde{h}D \rangle^{-1} (P_0(hD+\zeta) + hV) u}.
\end{equation}
The result follows from this in a standard way by the Hahn-Banach theorem.

Let $\rho(x) = \zeta \cdot x$ and $\rho_{\varepsilon} = \rho + \frac{h}{\varepsilon} \frac{\rho^2}{2}$. Let $u \in C^{\infty}_c(\Omega)$, and choose $\chi \in C^{\infty}_c(\tilde{\Omega})$ where $\chi = 1$ near $\Omega \subset \subset \tilde{\Omega}$. Then the pseudolocal estimate 
\begin{equation*}
\norm{(1-\chi) \langle \tilde{h}D \rangle^{-1} v} \leq C_M \tilde{h}^M \norm{\langle \tilde{h}D \rangle^{-1} v}, \quad v \in C_c^{\infty}(\Omega), \text{ any $M$},
\end{equation*}
and the Carleman estimate \eqref{carleman_freedirac_convexified} imply that when $h$, $\tilde{h}$ are small, 
\begin{align*}
h \norm{\langle \tilde{h}D \rangle^{-1} u} &\leq h \norm{\chi \langle \tilde{h}D \rangle^{-1} u} + h \norm{(1-\chi) \langle \tilde{h}D \rangle^{-1} u} \\
 &\leq C\sqrt{\varepsilon} \norm{P_0(hD+\nabla \rho_{\varepsilon}) (\chi \langle \tilde{h}D \rangle^{-1} u)} + \frac{h}{2} \norm{\langle \tilde{h}D \rangle^{-1} u} \\
 &\leq C\sqrt{\varepsilon} \norm{\chi P_0(hD+\nabla \rho_{\varepsilon}) (\langle \tilde{h}D \rangle^{-1} u)} + \frac{3h}{4} \norm{\langle \tilde{h}D \rangle^{-1} u}.
\end{align*}
Since $\nabla \rho_{\varepsilon} = (1+\frac{h}{\varepsilon} \zeta \cdot x)\zeta$, we obtain 
\begin{align*}
h \norm{\langle \tilde{h}D \rangle^{-1} u} &\leq C\sqrt{\varepsilon} \norm{\chi \langle \tilde{h}D \rangle^{-1} P_0(hD+\nabla \rho_{\varepsilon}) u} + \frac{C h}{\sqrt{\varepsilon}} \norm{\chi P_0(\zeta) [\zeta \cdot x, \langle \tilde{h}D \rangle^{-1}] u}.
\end{align*}
Here $[\zeta \cdot x, \langle \tilde{h}D \rangle^{-1}] = \tilde{h} R \langle \tilde{h}D \rangle^{-1}$ where $R$ is bounded on $L^2$ with norm $\lesssim 1$. If $\tilde{h}$ is small enough, we obtain 
\begin{equation*}
h \norm{\langle \tilde{h}D \rangle^{-1} u}  \leq  C\sqrt{\varepsilon} \norm{\langle \tilde{h}D \rangle^{-1} P_0(hD+\nabla \rho_{\varepsilon}) u}.
\end{equation*}
If $\varepsilon$ is small enough, the estimate remains true with $P_0(hD+\nabla \rho_{\varepsilon})$ replaced by $P_0(hD+\nabla \rho_{\varepsilon}) + hV$, since 
\begin{equation*}
\norm{\langle \tilde{h}D \rangle^{-1} V u} \leq C \norm{\langle \tilde{h}D \rangle^{-1} u}.
\end{equation*}
Then \eqref{carleman_hminus1} follows since $e^{\rho_{\varepsilon}/h} = m e^{\rho/h}$ with $\norm{m}_{W^{1,\infty}(\Omega)}$ bounded.
\end{proof}

Proposition \ref{prop:inhomogeneous_eq_h1_tilde}, and the fact that $V \in W^{1,\infty}$, gives a correction term $R$ satisfying 
\begin{multline*}
\norm{R} + \tilde{h} \norm{\nabla R} \lesssim \norm{h \mathcal{L}_V^{\sharp} C_1 + V^{\flat}(C_0 + h C_1)} + \tilde{h} \norm{\nabla (h \mathcal{L}_V^{\sharp} C_1 + V^{\flat}(C_0 + h C_1))} \\
 \lesssim h^{1-\sigma} + \norm{V^{\flat}}_{L^2} + h^{1-2\sigma} \tilde{h} + \tilde{h} \norm{\nabla V^{\flat}}_{L^2} + \tilde{h} \norm{V^{\flat}}_{L^2}.
\end{multline*}
Choosing $\tilde{h} = h^{\sigma_1}$ for $\sigma_1 > 0$ small, one has 
\begin{equation*}
\norm{R} = o(1), \quad \norm{\nabla R} = o(1)
\end{equation*}
as $h \to 0$. Thus we obtain a solution 
\begin{equation*}
U = e^{-\frac{\zeta \cdot x}{h}}(P_0(\zeta) e^{i\phi^{\sharp}} + \frac{h}{i} (P_0(D+A^{\sharp}) - Q_I^{\sharp}) e^{i\phi^{\sharp}} I_4 + R),
\end{equation*}
where $\norm{R} = o(1)$. Also, since $P_{\rho} = iP_0(\nabla \rho) + h \mathcal{L}_V$, we have 
\begin{equation*}
\norm{P_0(\zeta) R} \lesssim h \norm{R}_{H^1} + h^2 \norm{\mathcal{L}_{V^{\sharp}} C_1} + h \norm{C_0 + h C_1}_{L^{\infty}} \norm{V^{\flat}} = o(h).
\end{equation*}
Noting that by \eqref{transport_eq} we may replace $e^{i \phi^{\sharp}}$ by $e^{i \phi^{\sharp} + ik \cdot x}$ where $k \cdot \zeta = 0$, we have arrived at the solutions for Lipschitz coefficients.

\begin{prop} \label{prop:cgo_construction_nonsmooth}
Let $A, q_{\pm} \in W^{1,\infty}_c(\Omega)$, and let $\zeta \in \mC^3$ satisfy $\zeta^2 = 0$ and $\abs{\re\,\zeta} = \abs{\im\,\zeta}= 1$. There exists a solution $U \in H^1(\Omega)^{4 \times 4}$ to $\mathcal{L}_V U = 0$ in $\Omega$, of the form 
\begin{equation*}
U = e^{-\frac{1}{h} \zeta \cdot x} e^{i\phi^{\sharp} + ik \cdot x} (P_0(\zeta) + \frac{h}{i} (P_0(\nabla \phi^{\sharp} + k + A^{\sharp}) - Q_I^{\sharp}) + R),
\end{equation*}
where $k \in \mR^3$ with $k \cdot \zeta = 0$, and where $\norm{R} = o(1), \norm{P_0(\zeta) R} = o(h)$ as $h \to 0$.
\end{prop}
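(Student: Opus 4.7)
The plan is to mimic the smooth WKB construction of Proposition \ref{prop:cgo_construction_smooth}, but truncated to just two amplitude terms, and to compensate for the loss of regularity by replacing $V$ by its mollification at scale $\varepsilon = h^{\sigma}$ with $\sigma > 0$ small. Decomposing $A = A^{\sharp}+A^{\flat}$, $Q = Q^{\sharp}+Q^{\flat}$ with the sharp parts being convolutions against $\eta_{\varepsilon}$, and writing $V^\sharp = P_0(A^\sharp)+Q^\sharp$, I would look for
\[
U = e^{-\rho/h}(C_0 + hC_1 + R), \qquad \rho(x) = \zeta \cdot x.
\]
Using $P_{\rho} = iP_0(\zeta) + h\mathcal{L}_V$ and separating powers of $h$, this reduces to $iP_0(\zeta)C_0 = 0$, $iP_0(\zeta)C_1 = -\mathcal{L}_{V^{\sharp}} C_0$, and $P_{\rho} R = -h^2 \mathcal{L}_{V^{\sharp}} C_1 - hV^{\flat}(C_0 + hC_1)$; the point of the splitting is that all $V^\flat$ contributions are pushed into the source term for $R$.

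For the first two equations I would use the same algebraic structure as in the smooth case. Since $\zeta^2 = 0$ gives $\mim\,P_0(\zeta) = \mker\,P_0(\zeta)$, the natural choice is $C_0 = P_0(\zeta) e^{i\phi^{\sharp}}$; the commutator identity $\mathcal{L}_{V^{\sharp}} P_0(\zeta) = M_{A^{\sharp}} + P_0(\zeta)(-P_0(D+A^{\sharp}) + Q_I^{\sharp})$ together with $\Delta \rho = 0$ reduces the second equation to the transport equation $\zeta \cdot (\nabla \phi^{\sharp} + A^{\sharp}) = 0$, which is solved explicitly by the Cauchy transform $\phi^{\sharp} = (\zeta\cdot\nabla)^{-1}(-\zeta \cdot A^{\sharp})$. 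This pins down $C_1 = \frac{1}{i}(P_0(D+A^{\sharp})-Q_I^{\sharp})e^{i\phi^{\sharp}} I_4$. Finally, since $\zeta \cdot \nabla(e^{ik\cdot x}) = 0$ for any $k \in \mR^3$ with $k\cdot \zeta = 0$, replacing $e^{i\phi^{\sharp}}$ by $e^{i\phi^{\sharp}+ik\cdot x}$ leaves every identity used above intact, producing the $k$-parameter in the statement.

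To solve for $R$ I would apply Proposition \ref{prop:inhomogeneous_eq_h1_tilde} with an auxiliary scale $\tilde h = h^{\sigma_1}$, obtaining
\[
\norm{R}+\tilde h \norm{\nabla R} \lesssim \norm{F}+\tilde h \norm{\nabla F}, \quad F = h\mathcal{L}_{V^{\sharp}} C_1 + V^{\flat}(C_0 + hC_1).
\]
Standard mollifier estimates give $\norm{A^{\sharp}}_{W^{k,\infty}} \lesssim \varepsilon^{1-k}$ for $k \geq 1$, together with $\norm{V^{\flat}}_{L^2} = o(1)$ and $\norm{\nabla V^{\flat}}_{L^2} \lesssim 1$ (using the compact support of $A,q_\pm$ and the approximation of $W^{1,\infty}$ functions by their mollifications in $L^2$). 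These yield $\norm{F} = o(1) + O(h^{1-\sigma})$ and $\norm{\nabla F} = O(1) + O(h^{1-2\sigma})$, so choosing $\sigma, \sigma_1$ small with $\sigma \ll \sigma_1$ gives $\norm{R} = o(1)$ and $\norm{\nabla R} = o(1)$. For the final sharper bound, I would multiply the equation for $R$ by $P_0(\zeta)$, using $P_0(\zeta)^2 = 0$ to kill the leading term, so that $\norm{P_0(\zeta) R} \lesssim h\norm{\mathcal{L}_V R} + \norm{F_{\text{rest}}} = o(h)$.

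The main obstacle is the joint bookkeeping of the three scales $h \ll \tilde h = h^{\sigma_1} \ll \varepsilon = h^{\sigma}$. The reason one cannot simply invoke the $L^2$ solvability of Proposition \ref{prop:inhomogeneous_eq_l2} is that the $o(h)$ bound on $P_0(\zeta) R$ requires control of $\norm{\nabla R}$, which in turn forces the use of the refined Carleman estimate with the $\tilde h$ parameter; simultaneously, $\sigma$ must be small enough that the derivative loss $\norm{\nabla V^{\sharp}}_{L^\infty} \lesssim h^{-\sigma}$ inside $h \mathcal{L}_{V^{\sharp}} C_1$ still produces a genuine $o(1)$ source term in both norms appearing on the right-hand side.
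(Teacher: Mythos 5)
Your construction is, step for step, the one the paper uses: the same mollification at scale $\varepsilon = h^{\sigma}$ with $V = V^{\sharp}+V^{\flat}$, the same two-term ansatz with $C_0 = P_0(\zeta)e^{i\phi^{\sharp}}$ and $C_1$ fixed by the Cauchy-transform solution of $\zeta\cdot(\nabla\phi^{\sharp}+A^{\sharp})=0$, the same insertion of the harmless factor $e^{ik\cdot x}$ with $k\cdot\zeta=0$, and the same appeal to Proposition \ref{prop:inhomogeneous_eq_h1_tilde} with the auxiliary parameter $\tilde h$ to control the remainder.

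There is, however, one concrete slip in the parameter bookkeeping, and it sits exactly at the step you identify as the main obstacle. The estimate of Proposition \ref{prop:inhomogeneous_eq_h1_tilde} gives $\norm{R}+\tilde h\norm{\nabla R} \lesssim \norm{F}+\tilde h\norm{\nabla F}$, so $\norm{\nabla R} \lesssim \tilde h^{-1}\norm{F}+\norm{\nabla F}$, and the dangerous contribution is $\tilde h^{-1}\norm{V^{\flat}}_{L^2}$. Since $V\in W^{1,\infty}_c\subset W^{1,2}$, the available rate is $\norm{V^{\flat}}_{L^2}\lesssim \varepsilon = h^{\sigma}$, so this term is $h^{\sigma-\sigma_1}$, which is $o(1)$ only if $\sigma_1<\sigma$, i.e.\ $\varepsilon \ll \tilde h$. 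Your stated ordering $h\ll\tilde h\ll\varepsilon$ and your condition $\sigma\ll\sigma_1$ are the reverse of this, and with that choice $\tilde h^{-1}\norm{V^{\flat}}_{L^2}\to\infty$, so $\norm{\nabla R}=o(1)$ fails --- and this bound is genuinely needed, since your $o(h)$ estimate for $P_0(\zeta)R$ uses $h\norm{\mathcal{L}_V R}\lesssim h\norm{R}_{H^1}$. The fix is one line (take, say, $\sigma=1/4$, $\sigma_1=1/8$), but as written the step fails. A smaller point: literally multiplying the equation $iP_0(\zeta)R+h\mathcal{L}_V R=-hF$ on the left by $P_0(\zeta)$ annihilates the term $P_0(\zeta)^2R$ and tells you nothing about $P_0(\zeta)R$; what you want (and what your displayed inequality actually amounts to) is to isolate $iP_0(\zeta)R=-hF-h\mathcal{L}_V R$ and take norms, using $h\norm{F}=o(h)$.
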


\section{Uniqueness result}

We will prove Theorem \ref{thm:uniqueness}. The first step is a standard reduction to a larger domain.

\begin{lemma} \label{lemma:integral_identity}
Let $\Omega \subset \subset \Omega'$ be two bounded open sets in $\mR^3$, and let $A_j, q_{\pm,j} \in W^{1,\infty}(\Omega')$ satisfy $A_1 = A_2$ and $q_{\pm,1} = q_{\pm,2}$ in $\Omega' \smallsetminus \Omega$. If $C_{V_1} = C_{V_2}$ in $\Omega$, then $C_{V_1} = C_{V_2}$ in $\Omega'$ and 
\begin{equation} \label{integral_identity}
\int_{\Omega'} U_2^* (V_1-V_2)U_1 \,dx = 0
\end{equation}
for any solutions $U_j \in H^1(\Omega')^{4 \times 4}$ of $\mathcal{L}_{V_j} U_j = 0$ in $\Omega'$.
\end{lemma}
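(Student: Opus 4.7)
The plan proceeds in two steps: first promote $C_{V_1}(\Omega) = C_{V_2}(\Omega)$ to the corresponding equality on $\Omega'$ via a gluing construction, and then derive the integral identity by pairing an arbitrary test solution $U_2$ against the $V_2$-solution produced by this gluing.

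\emph{Step 1 (Cauchy data on $\Omega'$).} Given $U \in H^1(\Omega')^4$ with $\mathcal{L}_{V_1} U = 0$, the restriction $U|_\Omega$ has Cauchy data on $\partial\Omega$ lying in $C_{V_1}(\Omega) = C_{V_2}(\Omega)$, so there exists $\tilde U \in H^1(\Omega)^4$ solving $\mathcal{L}_{V_2}\tilde U = 0$ in $\Omega$ with $\tilde U - U|_{\Omega} \in H^1_0(\Omega)^4$ (this being exactly what equality of traces in the quotient $\mathcal{H}(\Omega)/H^1_0(\Omega)$ means). Define
\begin{equation*}
\widetilde W = \tilde U \ \text{on } \Omega, \qquad \widetilde W = U \ \text{on } \Omega' \setminus \Omega.
\end{equation*}
The $H^1_0$-matching on $\partial\Omega$ forces $\widetilde W \in H^1(\Omega')^4$, and the hypothesis $V_1 = V_2$ outside $\Omega$ gives $\mathcal{L}_{V_2}\widetilde W = 0$ throughout $\Omega'$. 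Since $\widetilde W = U$ near $\partial\Omega'$, the two share Cauchy data there, proving $C_{V_1}(\Omega') \subseteq C_{V_2}(\Omega')$; the reverse inclusion is symmetric.

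\emph{Step 2 (Integral identity).} Let $U_1, U_2 \in H^1(\Omega')^{4 \times 4}$ solve $\mathcal{L}_{V_j} U_j = 0$. Applying Step 1 column-by-column to $U = U_1$ produces $\widetilde U_1 \in H^1(\Omega')^{4 \times 4}$ with $\mathcal{L}_{V_2}\widetilde U_1 = 0$ in $\Omega'$ and $W := U_1 - \widetilde U_1$ vanishing on $\Omega' \setminus \Omega$; in particular $W \in H^1_0(\Omega')^{4 \times 4}$. Since $\mathcal{L}_{V_1} - \mathcal{L}_{V_2} = V_1 - V_2$ we have $\mathcal{L}_{V_1} W = -(V_1 - V_2)\widetilde U_1$. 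Now pair with $U_2$: integration by parts produces no boundary contribution because $W$ has zero trace on $\partial\Omega'$, and the self-adjointness of $P_0(D)$ together with $V_j^* = V_j$ (the $A_j$ are real and the $Q_j$ Hermitian) yields
\begin{equation*}
\int_{\Omega'} U_2^* \mathcal{L}_{V_1} W\,dx = \int_{\Omega'} (\mathcal{L}_{V_1} U_2)^* W\,dx = \int_{\Omega'} U_2^*(V_1 - V_2) W\,dx,
\end{equation*}
where the last equality uses $\mathcal{L}_{V_1} U_2 = (V_1 - V_2) U_2$. Substituting $\mathcal{L}_{V_1} W = -(V_1 - V_2) \widetilde U_1$ on the left and using $W + \widetilde U_1 = U_1$ gives \eqref{integral_identity}.

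The only delicate point is verifying in Step 1 that the spliced $\widetilde W$ lies in $H^1(\Omega')$, which is precisely what the quotient-space interpretation of Cauchy data in the definition of $C_V$ provides. Once this is granted, the rest is a single, boundary-term-free integration by parts, made transparent by the self-adjointness of the Pauli--Dirac operator and of each perturbation $V_j$.
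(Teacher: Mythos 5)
Your proof is correct and follows essentially the same route as the paper: the same gluing argument for Step 1, and in Step 2 the same key idea of replacing $U_1$ by a $V_2$-solution agreeing with it modulo $H^1_0(\Omega')$ and integrating by parts using the self-adjointness of $P_0(D)$ and the $V_j$. The only cosmetic difference is that you take the comparison solution to be the glued $\widetilde U_1$ from Step 1 and pair $\mathcal{L}_{V_1}W$ against $U_2$, whereas the paper invokes $C_{V_1}=C_{V_2}$ in $\Omega'$ abstractly to produce $\tilde U_2$ and manipulates $-(P_0(D)U_1|U_2)+(U_1|P_0(D)U_2)$ directly; the content is identical.
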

\begin{proof}
If $\mathcal{L}_{V_1} u' = 0$ in $\Omega'$, then there is $v \in \mathcal{H}(\Omega)^2$ such that $\mathcal{L}_{V_2} v = 0$ in $\Omega$ and $v_{\pm}|_{\partial \Omega} = u'_{\pm}|_{\partial \Omega}$. Let $v' = v$ in $\Omega$ and $v' = u'$ in $\Omega' \smallsetminus \Omega$. It is easy to see that $v' \in \mathcal{H}(\Omega')^2$ and $\mathcal{L}_{V_2} v' = 0$ in $\Omega'$, showing that $C_{V_1} \subseteq C_{V_2}$ in $\Omega'$. The same argument in the other direction gives $C_{V_1} = C_{V_2}$ in $\Omega'$.

Let $U_j$ be as described. Writing $(U|V) = \int_{\Omega'} V^* U \,dx$, we have  
\begin{equation*}
((V_1-V_2)U_1|U_2) = -(P_0(D)U_1|U_2) + (U_1|P_0(D)U_2).
\end{equation*}
Since $C_{V_1} = C_{V_2}$, there is $\tilde{U}_2 \in \mathcal{H}(\Omega')^{4 \times 4}$ with $\mathcal{L}_{V_2} \tilde{U}_2 = 0$ in $\Omega'$ and also $(U_1-\tilde{U}_2)_{\pm} \in H^1_0(\Omega')^{2 \times 4}$. Thus, writing $U_1 = (U_1-\tilde{U}_2) + \tilde{U}_2$ and integrating by parts, we obtain 
\begin{align*}
((V_1-V_2)U_1|U_2) &= -(P_0(D)\tilde{U}_2|U_2) + (\tilde{U}_2|P_0(D)U_2) \\
 &= (V_2 \tilde{U}_2|U_2) - (\tilde{U}_2|V_2 U_2) = 0.
\end{align*}
\end{proof}

Assume the conditions of Theorem \ref{thm:uniqueness}. By a gauge transformation we may assume that the normal components of $A_j$ vanish on $\partial \Omega$. Then by Theorem \ref{thm:boundary} we know that $A_1 = A_2$ and $q_{\pm,1} = q_{\pm,2}$ on $\partial \Omega$. Let $\Omega'$ be a ball such that $\Omega \subset \subset \Omega'$, and extend $A_j$ and $q_{\pm,j}$ as compactly supported Lipschitz functions in $\Omega'$ so that $A_1 = A_2$ and $q_{\pm,1} = q_{\pm,2}$ outside $\Omega$.

Lemma \ref{lemma:integral_identity} shows that in the proof of Theorem \ref{thm:uniqueness}, we may assume that $\Omega$ is a ball, the coefficients are in $W^{1,\infty}_c(\Omega)$, $C_{V_1} = C_{V_2}$ in $\Omega$, and \eqref{integral_identity} holds for solutions in $\Omega$. The recovery of the coefficients proceeds similarly as in \cite{nakamuratsuchida}, using now the solutions provided by Proposition \ref{prop:cgo_construction_nonsmooth}. We will give the details since one needs to ensure that the estimates for nonsmooth solutions are sufficient for this argument. Theorem \ref{thm:uniqueness} will follow from the two propositions below.

\begin{prop}
$\nabla \times A_1 = \nabla \times A_2$ in $\Omega$.
\end{prop}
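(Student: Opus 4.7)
Plan. Given $\xi \in \mR^3$, choose orthonormal vectors $\mu_1,\mu_2 \in \mR^3$ with $\mu_j \cdot \xi = 0$, and set
\[
\zeta_1 = \mu_1 + i\bigl(\sqrt{1 - h^2|\xi|^2/4}\,\mu_2 - \tfrac{h}{2}\xi\bigr), \qquad \zeta_2 = -\mu_1 + i\bigl(\sqrt{1 - h^2|\xi|^2/4}\,\mu_2 + \tfrac{h}{2}\xi\bigr).
\]
Then $\zeta_j^2 = 0$, $\abs{\re \zeta_j} = \abs{\im \zeta_j} = 1$, and the crucial identity $\zeta_1 + \overline{\zeta_2} = -ih\xi$ holds. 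The limits $\zeta_j^0 = \lim_{h \to 0}\zeta_j$ satisfy $\overline{\zeta_2^0} = -\zeta_1^0$. For each $j$, apply Proposition \ref{prop:cgo_construction_nonsmooth} with the frequency $\zeta_j$ and a free parameter $k_j \in \mR^3$, $k_j \cdot \zeta_j = 0$, to produce CGO solutions $U_j \in H^1(\Omega)^{4\times 4}$ of $\mathcal{L}_{V_j}U_j = 0$.

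Insert $U_1, U_2$ into the integral identity \eqref{integral_identity}. The exponential factors combine as $e^{-(\zeta_1 + \overline{\zeta_2}) \cdot x/h} = e^{i\xi \cdot x}$, yielding an expansion
\[
\int_{\mR^3} e^{i\xi \cdot x}\, e^{i(\phi_1^\sharp - \overline{\phi_2^\sharp}) + i(k_1 - k_2)\cdot x}\bigl[\Gamma_0(x) + h\,\Gamma_1(x) + o(h)\bigr]\,dx = 0,
\]
with principal part $\Gamma_0 = P_0(\overline{\zeta_2})(V_1 - V_2) P_0(\zeta_1) \to -P_0(\zeta_1^0)(V_1 - V_2) P_0(\zeta_1^0)$. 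The electric contribution vanishes since $P_0(\zeta_1^0) Q P_0(\zeta_1^0) = \mathrm{diag}\bigl((\sigma\cdot\zeta_1^0)^2 q_-, (\sigma\cdot\zeta_1^0)^2 q_+\bigr) = 0$, and repeated application of \eqref{pzero_zeta_xi} together with $P_0(\zeta_1^0)^2 = 0$ collapses the magnetic contribution to $-2\bigl(\zeta_1^0 \cdot (A_1 - A_2)\bigr)P_0(\zeta_1^0)$. Setting $\Phi = \phi_1^0 - \overline{\phi_2^0}$ (the $h \to 0$ limit of $\phi_1^\sharp - \overline{\phi_2^\sharp}$), the transport equations for $\phi_j^\sharp$ give $\zeta_1^0 \cdot \nabla \Phi = -\zeta_1^0 \cdot (A_1 - A_2)$, so an integration by parts rewrites the leading integral as a multiple of $\zeta_1^0 \cdot (\xi + k_1 - k_2)$. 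Since $\zeta_1^0 \cdot \xi = 0$ and the constraint $k_j \cdot \zeta_j^0 = 0$ forces $k_j \in \mR\xi$, this quantity vanishes automatically: the $O(1)$ identity is trivial.

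Divide by $h$ and extract the $\Gamma_1$ contribution. The next-order terms arise from three sources: the amplitude $\tfrac{1}{i}(P_0(D + A_j^\sharp) - Q_{I,j}^\sharp)e^{i\phi_j^\sharp + ik_j \cdot x}$ from Proposition \ref{prop:cgo_construction_nonsmooth}, the $O(h)$ shifts $\zeta_j - \zeta_j^0 \sim \tfrac{h}{2}\xi$, and cross terms. Applying \eqref{pzero_zeta_xi} repeatedly to reduce triple products of $P_0$'s, and integrating by parts to move derivatives off $e^{i\Phi}$, the identity takes the schematic form
\[
\int_{\mR^3} e^{i\xi \cdot x}\, e^{i\Phi(x)}\,\bigl[\,\alpha\,\bigl(\mu_1 \times \mu_2\bigr)\cdot \bigl(\xi \times (A_1 - A_2)\bigr)\,M_{\mathrm{mag}} + \beta\,(q_{\pm,1} - q_{\pm,2})\,M_{\mathrm{el}} + \cdots\bigr]\,dx = 0,
\]
where $M_{\mathrm{mag}}, M_{\mathrm{el}}$ are nonzero matrices of different block structure inherited from the Pauli decomposition of $P_0$ and $Q$. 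Projecting onto the off-diagonal block isolates $M_{\mathrm{mag}}$, and rotating $\{\mu_1,\mu_2\}$ inside $\xi^\perp$ recovers both transverse components of $\xi \times \widehat{A_1 - A_2}(\xi)$. Fourier inversion in $\xi$ gives $\xi \times \widehat{(A_1 - A_2)}(\xi) = 0$ for every $\xi$, hence $\nabla \times A_1 = \nabla \times A_2$.

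The main obstacle is quantitative: after dividing the identity by $h$, one must show that all remainders vanish in the limit. The correction $R_j$ satisfies $\norm{R_j} = o(1)$ and $\norm{P_0(\zeta_j) R_j} = o(h)$, and the mollification defect obeys $\norm{V^\flat}_{L^2} = O(h^\sigma)$; combining these with the $W^{1,\infty}$ bounds on the amplitudes produces contributions $o(1)$ after rescaling, provided $\sigma$ is chosen appropriately. The second delicate step is algebraic: one must track how the amplitude $\tfrac{1}{i}(P_0(D + A^\sharp) - Q_I^\sharp)$ interacts with $P_0(\zeta_1^0)$ from the opposite side so that the $Q_I^\sharp$ part contributes only to the diagonal block of the identity (where it can be discarded or independently inverted to recover $q_\pm$ in a subsequent proposition), leaving a clean statement about $\xi \times (A_1 - A_2)$ in the off-diagonal block.
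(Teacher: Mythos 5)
There is a genuine gap, and it sits exactly at the point you declare the problem solved ``automatically.'' Your leading-order identity is not trivial. After the exponentials combine you arrive (as the paper does, with its $k$ playing the role of your $\xi$) at
\[
\int e^{i\xi\cdot x}\,e^{i\Phi}\,\bigl(\zeta_1^0\cdot(A_1-A_2)\bigr)\,dx \;=\; 0 ,
\]
and you argue that substituting $\zeta_1^0\cdot(A_1-A_2)=-\zeta_1^0\cdot\nabla\Phi$ and integrating by parts reduces this to a multiple of $\zeta_1^0\cdot(\xi+k_1-k_2)=0$. But the integration by parts produces a nonvanishing boundary contribution: $\Phi$ is a Cauchy transform of a compactly supported function, so $e^{i\Phi}$ is neither compactly supported nor constant outside $\supp(A_1-A_2)$; it only decays like $1/\abs{y}$ in the $\zeta_1^0$-plane. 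The term $\int_{\partial\Omega'}(\zeta_1^0\cdot\nu)e^{i\xi\cdot x}e^{i\Phi}\,dS$ (equivalently, the limit of circle integrals at infinity in the $\zeta_1^0$-plane) does not vanish, and indeed cannot: if the identity were automatic for all $A_1,A_2$, the curl could never be read off from it, yet this identity is precisely where the curl information lives. The paper keeps this $O(1)$ identity and invokes Lemma 6.2 of \cite{saloreconstruction} (an Eskin--Ralston type stationary-phase argument) to replace $e^{i\Phi}$ by $1$, which yields $\widehat{\zeta\cdot(A_1-A_2)}=0$ on $\zeta^\perp$ and hence $\nabla\times A_1=\nabla\times A_2$ after varying $\zeta$. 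Your proposal never uses any such device, and some device of this kind is indispensable because $\Phi$ depends on the unknown $A_1-A_2$.

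Having discarded the $O(1)$ information, you fall back on the $O(h)$ terms, but that part of your argument is only schematic: the claimed appearance of $(\mu_1\times\mu_2)\cdot(\xi\times(A_1-A_2))$ at order $h$ is asserted rather than computed, and dividing \eqref{integral_identity} by $h$ forces you to show every remainder is $o(h)$, not merely $o(1)$ --- e.g.\ the cross term $\int R_2^*(V_1-V_2)P_0(\zeta_1)e^{i\phi_1^\sharp}\,dx$ requires the decomposition via \eqref{pzero_zeta_xi} and the bound $\norm{P_0(\zeta_j)R_j}=o(h)$ from Proposition \ref{prop:cgo_construction_nonsmooth}, which your outline mentions only in passing. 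In the paper the $O(h)$ layer is reserved for the electric potentials (the next proposition), and even there it is only usable after $\nabla\times A_1=\nabla\times A_2$ has been established and the phases $\phi_j^\sharp$ have been gauged to coincide. The fix is to restore the leading-order identity and supply the Eskin--Ralston step; the rest of your setup (the choice of $\zeta_1,\zeta_2$ with $\zeta_1+\overline{\zeta_2}=-ih\xi$ versus the paper's fixed $\zeta$ with a free $k\perp\re\zeta,\im\zeta$) is a legitimate, essentially equivalent variant.
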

\begin{proof}
Choose $\zeta \in \mC^3$ with $\zeta^2 = 0$ and $\abs{\re\,\zeta} = \abs{\im\,\zeta} = 1$. Let $k \in \mR^3$ be orthogonal to $\re\,\zeta$ and $\im\,\zeta$. By Proposition \ref{prop:cgo_construction_nonsmooth} we may choose solutions to $\mathcal{L}_{V_j} U_j = 0$ in $\Omega$ of the form 
\begin{eqnarray*}
 & U_1 = e^{\frac{1}{h} \zeta \cdot x} e^{ik \cdot x} (-P_0(\zeta) e^{i\phi_1^{\sharp}} + R_1), & \\
 & U_2 = e^{-\frac{1}{h} \bar{\zeta} \cdot x} (P_0(\bar{\zeta}) e^{i\bar{\phi_2^{\sharp}}} + R_2), & 
\end{eqnarray*}
where $\phi_j^{\sharp} = (\zeta \cdot \nabla)^{-1}(-\zeta \cdot A_j^{\sharp})$ and $\norm{R_j} = o(1)$ as $h \to 0$. Then 
\begin{equation*}
U_2^* = e^{-\frac{1}{h} \zeta \cdot x} (P_0(\zeta) e^{-i\phi_2^{\sharp}} + R_2^*).
\end{equation*}
Inserting these in \eqref{integral_identity} and letting $h \to 0$ gives 
\begin{equation*}
\int e^{ik \cdot x} e^{i\phi} P_0(\zeta) P_0(A_1-A_2) P_0(\zeta) \,dx = 0,
\end{equation*}
where $\phi = (\zeta \cdot \nabla)^{-1}(-\zeta \cdot (A_1-A_2))$ and we have used $P_0(\zeta) Q_j P_0(\zeta) = 0$. Using the commutator identity for $P_0$ one obtains 
\begin{equation*}
\int e^{ik \cdot x} e^{i\phi} (\zeta \cdot (A_1-A_2)) \,dx = 0.
\end{equation*}
Lemma 6.2 in \cite{saloreconstruction}, which is based on \cite{eskinralston}, implies that the same identity is true with $e^{i\phi}$ replaced by $1$. Consequently 
\begin{equation*}
\int e^{ik \cdot x} (\zeta \cdot (A_1-A_2)) \,dx = 0,
\end{equation*}
for all $k$ orthogonal to $\re\,\zeta$, $\im\,\zeta$. This implies the vanishing of the Fourier transform of components of $\nabla \times (A_1-A_2)$.
\end{proof}

\begin{prop}
$q_{\pm,1} = q_{\pm,2}$ in $\Omega$.
\end{prop}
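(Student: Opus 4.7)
The plan is to mimic the preceding proof of $\nabla \times A_1 = \nabla \times A_2$ but work one order higher in $h$, since the leading-order contribution vanishes when $\tilde Q := Q_1 - Q_2$ is block-diagonal. First I would exploit the identity of magnetic fields just established: since $\Omega$ is a ball there exists $p$ with $A_1 - A_2 = \nabla p$ and $p|_{\partial\Omega} = 0$ (the boundary condition uses Theorem~\ref{thm:boundary} and the earlier normalization of the normal components of $A_j$), and gauge invariance of the Cauchy data then lets me assume $A_1 = A_2 =: A$. The integral identity \eqref{integral_identity} reduces to $\int_\Omega U_2^* \tilde Q\, U_1 \, dx = 0$ for every pair of CGO solutions, with $\tilde Q = \mathrm{diag}(\tilde q_+ I_2, \tilde q_- I_2)$.

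Next I take CGO solutions from Proposition~\ref{prop:cgo_construction_nonsmooth}, retaining the $O(h)$ amplitude: $U_1$ built from $-\zeta$ with an additional plane wave $e^{ik\cdot x}$ ($k\in\mR^3$, $k\cdot\zeta = 0$) and $U_2$ built from $\bar\zeta$. Since $A$ is real the Cauchy-transform phases satisfy $\overline{\phi^\sharp_2} = \phi^\sharp_1$, so the phase factors in $U_2^* U_1$ telescope to $e^{ik\cdot x}$, and the leading matrix product $P_0(\zeta)\tilde Q P_0(\zeta)$ vanishes identically: $(\sigma\cdot\zeta)^2 = \zeta^2 = 0$ together with the block-diagonal structure of $\tilde Q$ forces this product to zero. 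Hence the first non-trivial contribution is at order $h$.

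The heart of the argument is to divide by $h$ and pass to the limit. Using the commutation $P_0(\zeta)\tilde Q = \tilde Q^{\mathrm{sw}} P_0(\zeta)$ with $\tilde Q^{\mathrm{sw}} := \mathrm{diag}(\tilde q_- I_2, \tilde q_+ I_2)$, every cross-product involving $R_j$ can be rewritten to contain a factor $P_0(\zeta) R_j$, which by Proposition~\ref{prop:cgo_construction_nonsmooth} is $o(h)$ in $L^2$. Combining this with the anticommutation \eqref{pzero_zeta_xi} and the eikonal $\zeta\cdot(\nabla\phi_j^\sharp + A^\sharp) = 0$, the $h\to 0$ limit of $h^{-1}\int_\Omega U_2^*\tilde Q U_1\, dx$ reduces to
\[
\int_\Omega e^{ik\cdot x}\Bigl[\tilde Q^{\mathrm{sw}} P_0(\zeta) P_0(k) \;-\; \Delta(q_+ q_-)\, P_0(\zeta)\Bigr] dx = 0,
\]
where $\Delta(q_+ q_-) := q_{+,1}q_{-,1} - q_{+,2}q_{-,2}$. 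The first matrix is block-diagonal, equal to $i\,\mathrm{diag}\bigl(\tilde q_-\, \sigma\cdot(\zeta\times k),\ \tilde q_+\, \sigma\cdot(\zeta\times k)\bigr)$, while the second is block off-diagonal, so both integrals must vanish separately.

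For any nonzero $k\in\mR^3$ I choose $\re\zeta$ and $\im\zeta$ to form an orthonormal basis of $k^\perp$; then $\zeta\times k \ne 0$ and $\sigma\cdot(\zeta\times k)$ is a nonzero (though nilpotent) $2\times 2$ matrix, so the block-diagonal identity forces the scalar $\hat{\tilde q_\pm}(-k)$ to vanish. Varying $k$ over $\mR^3\setminus\{0\}$ yields $\tilde q_\pm \equiv 0$ in $\Omega$, as required. The main obstacle I expect is the $O(h)$ bookkeeping: with only Lipschitz coefficients one has only the weak estimate $\norm{R_j} = o(1)$, and it is precisely the sharper bound $\norm{P_0(\zeta)R_j} = o(h)$ that allows the limit to be taken. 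The algebraic trick of factoring $P_0(\zeta)$ to the outside of every remainder term via $P_0(\zeta)\tilde Q = \tilde Q^{\mathrm{sw}} P_0(\zeta)$ is the point at which this sharper bound enters.
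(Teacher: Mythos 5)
Your proof is correct and follows essentially the same route as the paper: gauge away $A_1-A_2$ using the first proposition and boundary determination, insert the order-$h$ CGO amplitudes from Proposition~\ref{prop:cgo_construction_nonsmooth}, observe that $P_0(\zeta)\tilde Q P_0(\zeta)=0$ and that the eikonal relation kills the $P_0(\nabla\phi^\sharp+A^\sharp)$ cross terms, control all remainder terms by commuting $Q$ past $P_0(\zeta)$ together with the $\norm{P_0(\zeta)R_j}=o(h)$ bound and the anticommutation decomposition of $R_j$, and pass to the limit of $h^{-1}$ times the integral identity. The only variation is cosmetic: after arriving at $\int e^{ik\cdot x}\bigl[Q_I P_0(\zeta)P_0(k)-\Delta(q_+q_-)P_0(\zeta)\bigr]\,dx=0$, you split it directly into its block-diagonal and block off-diagonal parts and read off $\hat{\tilde q}_\pm(-k)$ from the diagonal factor $i\sigma\cdot(\zeta\times k)\neq 0$, while the paper first adds the $\zeta$- and $\bar\zeta$-identities, multiplies by $P_0(\re\zeta)$, and reads off the off-diagonal factor $\sigma\cdot k$; both are valid and yield the same Fourier-vanishing conclusion.
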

\begin{proof}
Since $\nabla \times A_1 = \nabla \times A_2$ and $A_j \in W^{1,\infty}_c(\Omega)$ where $\Omega$ is a ball, one obtains $A_1 - A_2 = \nabla p$ for $p \in W^{2,\infty}(\Omega)$ where one may choose $p|_{\partial \Omega} = 0$. Thus, by a gauge transformation we may assume that $A_1 = A_2 = A$ where $A \in W^{1,\infty}_c(\Omega)$. Fix $k \in \mR^3$ and take $\zeta \in \mC^3$ with $\zeta^2 = 0$ and $k \cdot \zeta = 0$. By Proposition \ref{prop:cgo_construction_nonsmooth}, we take solutions to $\mathcal{L}_{V_j} U_j = 0$ of the form 
\begin{eqnarray*}
 & U_1 = e^{\frac{1}{h} \zeta \cdot x} e^{ik \cdot x} e^{i\phi^{\sharp}} (-P_0(\zeta) + \frac{h}{i} (P_0(\nabla \phi^{\sharp} + k + A^{\sharp}) - Q_{1,I}^{\sharp}) + R_1), & \\
 & U_2 = e^{-\frac{1}{h} \bar{\zeta} \cdot x} e^{i\bar{\phi^{\sharp}}} (P_0(\bar{\zeta}) + \frac{h}{i} (P_0(\nabla \bar{\phi^{\sharp}} + A^{\sharp}) - Q_{2,I}^{\sharp}) + R_2), & 
\end{eqnarray*}
where $\phi^{\sharp} = (\zeta \cdot \nabla)^{-1}(-\zeta \cdot A^{\sharp})$ and $\norm{R_j} = o(1)$, $\norm{P_0(\zeta) R_1} + \norm{P_0(\bar{\zeta}) R_2} = o(h)$ as $h \to 0$. Also, 
\begin{equation*}
U_2^* = e^{-\frac{1}{h} \zeta \cdot x} e^{-i\phi^{\sharp}} (P_0(\zeta) - \frac{h}{i} (P_0(\nabla \phi^{\sharp} + A^{\sharp}) - Q_{2,I}^{\sharp}) + R_2^*),
\end{equation*}
with $\norm{R_2^* P_0(\zeta)} = o(h)$. The identity \eqref{integral_identity} implies 
\begin{multline*}
\int e^{ik \cdot x} (P_0(\zeta) - \frac{h}{i} (P_0(\nabla \phi^{\sharp} + A^{\sharp}) - Q_{2,I}^{\sharp}) + R_2^*)(Q_1-Q_2) \\ 
 \cdot (-P_0(\zeta) + \frac{h}{i} (P_0(\nabla \phi^{\sharp} + k + A^{\sharp}) - Q_{1,I}^{\sharp}) + R_1) \,dx = 0.
\end{multline*}
The highest order term in $h$ is $P_0(\zeta) (Q_1-Q_2) P_0(\zeta)$, and this vanishes. Also the terms involving $P_0(\nabla \phi^{\sharp} + A^{\sharp})$ go away, since for $Q = Q_1-Q_2$, 
\begin{align*}
& P_0(\zeta) Q P_0(\nabla \phi^{\sharp} + A^{\sharp}) + P_0(\nabla \phi^{\sharp} + A^{\sharp}) Q P_0(\zeta) \\
&= Q_I(P_0(\zeta) P_0(\nabla \phi^{\sharp} + A^{\sharp}) + P_0(\nabla \phi^{\sharp} + A^{\sharp}) P_0(\zeta)) \\
&= 2 Q_I (\zeta \cdot (\nabla \phi^{\sharp} + A^{\sharp})) = 0.
\end{align*}
All the terms involving $R_j$ are $o(h)$. For the term involving $R_2^* Q R_1$, this is seen by using \eqref{pzero_zeta_xi} and writing 
\begin{equation*}
R_1 = \frac{1}{\sqrt{2}} (P_0(\zeta) P_0(\bar{\zeta}) R_1 + P_0(\bar{\zeta}) P_0(\zeta) R_1) = P_0(\zeta) o(1) + o(h).
\end{equation*}
Therefore, dividing the integral identity by $h$ and letting $h \to 0$ gives 
\begin{equation*}
\int e^{ik \cdot x} [ P_0(\zeta) Q (P_0(k) - Q_{1,I}) - Q_{2,I} Q P_0(\zeta) ] \,dx = 0.
\end{equation*} 
Commuting $P_0(\zeta)$ to the left gives 
\begin{equation*}
\int e^{ik \cdot x} P_0(\zeta) (Q P_0(k) - Q_1 Q_{1,I} + Q_2 Q_{2,I}) \,dx = 0.
\end{equation*}
This is true also when $\zeta$ is replaced by $\bar{\zeta}$, and adding the two identities and multiplying by $P_0(\re\,\zeta)$ on the left implies that 
\begin{equation*}
\int e^{ik \cdot x} ((Q_1-Q_2) P_0(k) - q_{+,1} q_{-,1} I_4 + q_{+,2} q_{-,2} I_4) \,dx = 0.
\end{equation*}
Looking at the off-diagonal $2 \times 2$ blocks shows 
\begin{equation*}
\int e^{ik \cdot x} (q_{\pm,1} - q_{\pm,2})(\sigma \cdot k) \,dx = 0.
\end{equation*}
The claim follows upon multiplying by $\sigma \cdot k$.
\end{proof}

\section{Boundary determination}

In this section we show that $q_{\pm}$ and the tangential component of $A$ at the boundary are uniquely determined by the Cauchy data set $C_{V}$. More precisely, we have the following.

\begin{prop}
\label{boundary determination}
Let $A_j$, $q_{\pm,j}$ be $W^{1,\infty}(\Omega)$ coefficients, $j = 1,2$, and let $\Omega$ have $C^1$ boundary. If $C_{V_1} = C_{V_2}$, then
\[(A_1 - A_2)(x_0)\cdot\hat t = 0\ \ \ \ \forall x_0\in\partial\Omega, \forall \hat t\in T_{x_0}(\partial\Omega),\]
\[q_{\pm,1}(x_0) = q_{\pm,2}(x_0)\ \ \ \forall x_0\in \partial\Omega.\]
\end{prop}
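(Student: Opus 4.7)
The plan follows the boundary-concentration approach of \cite{brownboundary,brownsalo} adapted to the Dirac system. Since $A_j$ and $q_{\pm,j}$ are real, $V_j=P_0(A_j)+Q_j$ is Hermitian and $\mathcal{L}_{V_j}$ is formally self-adjoint, so the argument of Lemma \ref{lemma:integral_identity} (applied directly in $\Omega$, with the $H^1_0(\Omega)$-correction arranged from $C_{V_1}=C_{V_2}$) yields the integral identity
\[
\int_\Omega U_2^*(V_1-V_2)U_1\,dx=0
\]
for every pair of solutions $U_j\in\mathcal{H}(\Omega)^{4\times 4}$ of $\mathcal{L}_{V_j}U_j=0$ whose $\pm$-traces on $\partial\Omega$ agree. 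The rest of the proof is to insert a well-chosen family of solutions concentrating at a boundary point.

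Fix $x_0\in\partial\Omega$ and flatten the boundary near $x_0$ by a bi-Lipschitz change of coordinates, taking $x_0$ to the origin, $\Omega$ to $\{x_3>0\}$ locally, and the interior unit normal at $x_0$ to $e_3$. For a unit tangent vector $\alpha\in T_{x_0}(\partial\Omega)=\mR^2\times\{0\}$, set $\zeta=\alpha+ie_3$, so that $\zeta^2=0$ and
\[
e^{i\zeta\cdot x/\tau}=e^{i\alpha\cdot x'/\tau}\,e^{-x_3/\tau}\qquad(\tau>0)
\]
decays exponentially into $\Omega$ while oscillating tangentially. Following Section 4, I would take
\[
U^{(\tau)}_1=e^{i\zeta\cdot x/\tau}\,\eta_\tau(x)\,e^{i\phi^\sharp_1}\Bigl(P_0(\zeta)+\tfrac{\tau}{i}\bigl(P_0(\nabla\phi^\sharp_1+A_1^\sharp)-Q_{1,I}^\sharp\bigr)\Bigr)+R_1,
\]
and similarly $U^{(\tau)}_2$ with $V_2$. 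Here $\eta_\tau$ is a boundary bump localized at the parabolic scale $|x'|\lesssim\sqrt{\tau}$, $x_3\lesssim\tau$ normalized to have unit $L^2$-mass; $\phi^\sharp_j$ solves the transport equation $\zeta\cdot(\nabla\phi^\sharp_j+A_j^\sharp)=0$ via the Cauchy transform $(\zeta\cdot\nabla)^{-1}$ with mollification scale $\varepsilon=\tau^\sigma$; and the error term $R_j$ is produced by Proposition \ref{prop:inhomogeneous_eq_h1_tilde} with $h=\tau$. A final $H^1_0$-correction, arranged from $C_{V_1}=C_{V_2}$, matches the boundary traces of $U^{(\tau)}_1$ and $U^{(\tau)}_2$ so that the integral identity applies.

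Insertion of $U_j^{(\tau)}$ into the identity produces the concentrating factor $e^{-2x_3/\tau}\eta_\tau(x)^2$. Rescaling $y'=x'/\sqrt{\tau}$, $y_3=x_3/\tau$ and using the continuity of $A_j$ and $q_{\pm,j}$ at $x_0$ (guaranteed by $W^{1,\infty}$-regularity), the leading contribution as $\tau\to 0$ is, schematically,
\[
P_0(\bar\zeta)\bigl(V_1(x_0)-V_2(x_0)\bigr)P_0(\zeta)
\]
multiplied by a nonzero integral in the rescaled variables. Expanding $V_1-V_2=P_0(A_1-A_2)+(Q_1-Q_2)$ and using \eqref{pzero_zeta_xi} together with $P_0(\zeta)^2=0$, this leading quantity reduces to a scalar proportional to $\alpha\cdot(A_1-A_2)(x_0)$; letting $\alpha$ range over the tangent directions at $x_0$ forces $(A_1-A_2)_{\mathrm{tan}}(x_0)=0$. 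Once this vanishes, dividing the identity by $\tau$ and taking the next order isolates $q_{\pm,1}(x_0)-q_{\pm,2}(x_0)$ via the off-diagonal $2\times 2$ blocks of $Q$, exactly as in the interior uniqueness proof of Section 5.

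The main technical obstacle is proving that the error terms $R_j$ contribute only $o(1)$ after the parabolic rescaling. Proposition \ref{prop:inhomogeneous_eq_h1_tilde} gives $\|R_j\|_{L^2}+\tilde h\|\nabla R_j\|_{L^2}\lesssim\tau^\gamma$ for some $\gamma>0$, but this bound must be tested against the concentrating weight $e^{-x_3/\tau}\eta_\tau$, so the three parameters $\tau,\varepsilon,\tilde h$ need to be balanced carefully against the localization scale of $\eta_\tau$. An additional complication is that the boundary flattening is merely bi-Lipschitz and the potentials are only $W^{1,\infty}$, so the commutators generated by these low-regularity objects have to be shown not to spoil the Carleman estimate after it is transported to the half-space. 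The interior bookkeeping of exactly this type is carried out in Section 4, and the analogous scalar boundary analysis is handled in \cite{brownboundary,brownsalo}; combining those techniques with the Dirac-specific algebra sketched above is expected to yield the required smallness of $R_j$ and complete the proof.
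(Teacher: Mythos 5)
Your plan is genuinely different from the paper's proof, and several of the differences introduce gaps that the paper's route is specifically designed to avoid.

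\textbf{Different route.} You attempt to build the boundary-concentrating solutions directly by grafting the CGO ansatz of Section~4 onto a bump function $\eta_\tau$, after bi-Lipschitz boundary flattening, with the semiclassical parameter $h=\tau$ tied to the concentration scale. The paper instead goes \emph{through the scalar second-order equation}: using the identity $\mathcal{L}_V(P_0(D+A)-Q_I) = -\Delta + 2A\cdot D + \tilde W$, it invokes Lemma~\ref{second order equation} (from \cite{brownsalo}) to obtain a scalar solution $u=u_0+\tilde u_1$, then sets $U = (P_0(D+A)-Q_I)u + \tilde R$ and solves for $\tilde R$ using Proposition~\ref{prop:inhomogeneous_eq_l2} with a \emph{fixed} $h=h_0$. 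Crucially, the Carleman parameter and the boundary concentration parameter $N,M$ are decoupled: $h$ never tends to zero, and the smallness of $\tilde R$ comes from the smallness of $\|u\|_{L^2}$, not from a $1/h$ gain. No boundary flattening is used: the bump $\eta_M(x)=\eta(M(x',\rho(x)))$ is written directly in terms of the $C^1$ defining function $\rho$, and the approximate eikonal relation $(i\hat t-\nabla\rho)^2\approx 0$ near $x_0$ is handled via the modulus of continuity of $\nabla\rho$ through the choice $N^{-1}=M^{-1}\omega(M^{-1})$.

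\textbf{Gaps in your proposal.} (i) The bi-Lipschitz flattening turns $P_0(D)$ into a variable-Lipschitz-coefficient operator; the semiclassical symbol calculus behind Proposition~\ref{prop:inhomogeneous_eq_h1_tilde} (and the convexification argument of Section~2) requires a smooth, constant-coefficient principal part, so the estimate does not transport to the flattened picture without substantial additional work, as you yourself flag but do not resolve. (ii) Multiplying the CGO amplitude by $\eta_\tau$ means that $P_0(D)$ also hits $\eta_\tau$, producing terms of order $\partial_3\eta_\tau\sim\tau^{-1}$ that are the \emph{same} order as the eikonal term $\tfrac{1}{\tau}P_0(\zeta)$; these are not absorbed into the error and your $o(1)$ claim for $R_j$ is unjustified. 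The paper avoids this by putting the bump into the scalar solution $u_0$ of the second-order equation where Lemma~\ref{second order equation} already proves the residual is small, and by the scaling $N\gg M$ so that $\nabla\eta_M\sim M$ is genuinely lower order than $N$. (iii) Your final algebra is too coarse: the limiting identity is a matrix equation $P_0(\hat t-i\nabla\rho)\bigl(P_0(A)+Q\bigr)P_0(\hat t+i\nabla\rho)=0$ in which the $A$- and $Q$-contributions are entangled; one cannot immediately "reduce to a scalar proportional to $\alpha\cdot(A_1-A_2)$'' and then recover $Q$ at the next order. The paper separates them by also taking $\hat t\mapsto-\hat t$, adding the two identities, using $|\zeta|^2=2$, and exploiting the block structure of $Q$, eventually forcing $A=0$ and then $Q=0$.

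In short, your scheme does not close as written; the paper's second-order reduction plus fixed-$h$ solvability sidesteps precisely the difficulties you leave open.
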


Following the idea of \cite{brownsalo} we first construct a sequence of solutions which concentrate at $x_0$ in the limit. We assume without loss of generality that $x_0 = 0$ and that $\Omega$ is defined by the function $\rho \in C^1(\mR^3 ; \mR)$ in such a way that $\Omega = \{ x \in \mR^3 \,;\, \rho(x) > 0\}$, $\partial \Omega = \{ x \in \mR^3 \,;\, \rho(x) = 0 \}$, and the outer unit normal of $\partial \Omega$ at $0$ is $-e_3= -\nabla\rho(0)$.

Let now $\eta(x)$ be a smooth function supported in $B(0,1/2)$ such that
\[\int_{\R^2}\eta(x',0)^2 \,dx' = 1.\]
For all $M>0$, define $\eta_M(x) := \eta(M(x',\rho(x)))$. Define 
\begin{equation*}
u_0(x) = e^{N(i\hat t\cdot x - \rho(x))}\eta_M(x),
\end{equation*}
where $\hat t \in T_{0}(\partial\Omega)$ is a unit vector and $N$ is chosen so that 
\begin{equation*}
N^{-1} = M^{-1}\omega(M^{-1}),
\end{equation*}
with $\omega(\cdot)$ a modulus of continuity for $\nabla \rho$. We prove the following.

\begin{prop}
\label{solutions concentration on boundary}
For $M$ large enough, one can find $H^1(\Omega)^{4\times 4}$ solutions to the equation
\[\mathcal{L}_V U = 0\]
of the form
\[U = NP_0(\hat t + i \nabla\rho) u_0 + R\]
with
\[\|R\|_{L^2}\leq CN^{-1/2}.\]
\end{prop}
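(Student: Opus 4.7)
Following the usual CGO paradigm, take $V_0 := NP_0(\hat t + i\nabla\rho)u_0$ as the leading approximate solution and seek a correction $R \in H^1(\Omega)^{4 \times 4}$ solving $\mathcal{L}_V R = -\mathcal{L}_V V_0$ with $\|R\|_{L^2} \lesssim N^{-1/2}$. The fundamental algebraic identity is
\begin{equation*}
P_0(\hat t + i\nabla\rho(x))^2 = \bigl[(1 - |\nabla\rho(x)|^2) + 2i\hat t \cdot \nabla\rho(x)\bigr]\, I_4,
\end{equation*}
which vanishes at $x = 0$ because $|\hat t| = |\nabla\rho(0)| = 1$ and $\hat t \perp \nabla\rho(0) = e_3$; consequently it is of size $O(\omega(|x|)) \leq O(\omega(1/M))$ on $\supp \eta_M$, where $\omega$ is the modulus of continuity of $\nabla\rho$.

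Applying $\mathcal{L}_V$ to $V_0$ by the product rule, factoring the phase $\Phi = i\hat t \cdot x - \rho$, and using the conjugation identity $e^{-N\Phi} P_0(D) e^{N\Phi} = P_0(D) + N P_0(\hat t + i\nabla\rho)$, one obtains $\mathcal{L}_V V_0 = e^{N\Phi} \widetilde{f}$ where the dominant contribution to $\widetilde{f}$ is $N^2 P_0(\hat t + i\nabla\rho)^2 \eta_M$, with further terms of size $NM$ coming from $\nabla\eta_M$ (which satisfies $|\nabla\eta_M|\lesssim M$) and lower-order terms involving $[P_0(D), P_0(\hat t + i\nabla\rho)]\eta_M$ and $V$. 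The defining relation $N\omega(1/M) = M$ is chosen precisely so that the dominant term has pointwise size $N^2\omega(1/M) = NM$, comparable to the derivative contribution, and a direct estimate using $\|\eta_M\|_{L^2}\sim M^{-3/2}$ yields a quantitative $L^2$ bound on $\widetilde f$.

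To solve the correction equation, set $R = e^{N\Phi}\widetilde R$ and look for $\widetilde R$ solving the shifted Dirac equation $(P_0(D) + NP_0(\hat t + i\nabla\rho) + V)\widetilde R = -\widetilde f$. Freezing $\nabla\rho(x) = \nabla\rho(0) = e_3$ identifies the frozen complex frequency $\zeta_0 = \hat t + ie_3$ with $\zeta_0^2 = 0$ and $|\re \zeta_0| = |\im \zeta_0| = 1$, so the equation falls into the framework of the semiclassical Dirac solvability of Proposition \ref{prop:inhomogeneous_eq_h1_tilde} at parameter $h = 1/N$. The variable part of the shift, $iNP_0(\nabla\rho(x) - e_3)$, is of pointwise size $N\omega(|x|)\lesssim M$ on the relevant support, small compared to the reference frequency $N$ and absorbable either by iteration or by choosing $h$ small enough; alternatively, one can use the Carleman framework of Proposition \ref{prop:inhomogeneous_eq_l2} with the linear limiting Carleman weight $\varphi(x) = x_3$, which is a limiting Carleman weight for the Laplacian.

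The main obstacle, and the reason the rate is $N^{-1/2}$ rather than worse, is the conversion from the bound on $\widetilde R$ back to the unweighted $L^2$ norm of $R = e^{N\Phi}\widetilde R$. Since $|R| = e^{-N\rho(x)}|\widetilde R|$ and $\int_\Omega e^{-2N\rho(x)}\,dx \sim N^{-1}$ concentrates in a slab of thickness $\sim 1/N$ about $\partial\Omega$, a uniform bound for $\widetilde R$ gives $\|R\|_{L^2}^2 \lesssim N^{-1}$. The delicate point is tracking the powers of $N$ and $M$ through the semiclassical solvability estimates and the scaling $N^{-1} = M^{-1}\omega(M^{-1})$ so that this rate emerges cleanly and the lower-order terms in $\widetilde f$ (including those requiring at most $C^1$ regularity of $\rho$, which may have to be handled via mollification of $\nabla\rho$) contribute no worse.
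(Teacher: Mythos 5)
Your approach is genuinely different from the paper's: you try to treat $V_0 = NP_0(\hat t + i\nabla\rho)u_0$ directly as a first-order quasimode and solve away the residual $\mathcal{L}_V V_0$ with the Dirac solvability results, whereas the paper never applies $\mathcal{L}_V$ to this amplitude. Instead it uses the factorization \eqref{square of operator}, invokes Lemma \ref{second order equation} (Brown--Salo) to get an \emph{exact} solution $u = u_0 + \tilde u_1$ of a scalar second-order equation, sets $U = (P_0(D+A)-Q_I)u + \tilde R$, and then only has to solve $\mathcal{L}_V\tilde R = (\lambda - \tilde W)u$ with right-hand side of size $\|u\|_{L^2}\lesssim N^{-1/2}$, which Proposition \ref{prop:inhomogeneous_eq_l2} at a \emph{fixed} $h=h_0$ handles. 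The leading term $NP_0(\hat t+i\nabla\rho)u_0$ then appears as the output of $P_0(D)u_0$, not as an input to $\mathcal{L}_V$.

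The direct route has a quantitative gap that your power counting does not close. The dominant residual term $N^2 P_0(\hat t+i\nabla\rho)^2 u_0$ has pointwise size $N^2\omega(1/M) = NM$, and so does the cutoff term $N\,e^{N\Phi}P_0(D)\eta_M$; by Lemma \ref{behaviour of u0}, $\|e^{-N\rho}\eta_M\|_{L^2}\lesssim M^{-1}N^{-1/2}$, so $\|\mathcal{L}_V V_0\|_{L^2}\sim NM\cdot M^{-1}N^{-1/2} = N^{1/2}$, which \emph{diverges}. To produce a correction with $\|R\|_{L^2}\leq CN^{-1/2}$ you must gain a factor $N^{-1}$, i.e.\ two semiclassical derivatives at frequency $N$; the first-order estimates available (Lemma \ref{carleman_estimate}, Propositions \ref{prop:inhomogeneous_eq_l2} and \ref{prop:inhomogeneous_eq_h1_tilde}) gain at most one, and at the frequency of $u_0$ the semiclassical $H^{-1}$ norm gains nothing since $\langle h\xi\rangle\sim 1$ there. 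The gain in the paper comes precisely from the second-order Lax--Milgram solve behind Lemma \ref{second order equation}, where the residual is measured in $H^{-1}(\Omega)$ and only $\nabla u_0$ enters. Two further points: $\mathcal{L}_V V_0$ is not even defined for a $C^1$ domain, since $P_0(D)$ falls on $P_0(\nabla\rho)$ (the paper's route only ever differentiates $u_0$ once, or works variationally); and your final step infers $\|R\|_{L^2}^2\lesssim N^{-1}$ from ``a uniform bound for $\widetilde R$'', but no $L^\infty$ bound on $\widetilde R$ is provided by any of the solvability results, so that conversion is unsupported. Also note that the variable shift $NP_0(i(\nabla\rho(x)-e_3))$ is $O(N)$ away from the origin, not small, so it cannot be absorbed perturbatively into the frozen-coefficient operator on all of $\Omega$.
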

To prove this result, we will use the following two lemmas from \cite{brownsalo}.

\begin{lemma}
\label{behaviour of u0}
Let $\eta$ be smooth and supported in $B(0,1/2)$. If $\eta_M(x) = \eta(M(x',\rho(x)))$ and $N^{-1} = M^{-1}\omega(M^{-1})$, we have 
\[\lim\limits_{M\to\infty} M^2 N \int_\Omega exp(-2N\rho(x))\eta_M(x) \,dx = 1/2\int_{\R^{2}}\eta(x',0) \,dx'\]
and
\[|\int_\Omega exp(-2N\rho(x))\eta_M(x) \,dx|\leq C M^{-2} N^{-1}.\]
\end{lemma}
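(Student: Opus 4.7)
The plan is to flatten the boundary with a local change of variables and then extract the asymptotic by a Laplace-type rescaling in the normal direction. Since $\nabla\rho(0) = e_3$ and $\rho\in C^1$, the map $\Phi_M(x) := (Mx_1, Mx_2, M\rho(x))$ is, for every sufficiently large $M$, a $C^1$ diffeomorphism from a neighborhood of $0$ onto its image, sending $\Omega$ locally to the half-space $\{y_3>0\}$, with Jacobian determinant $M^3\,\partial_{x_3}\rho(x)$. Writing $\Psi_M := \Phi_M^{-1}$ and $g_M(y) := \partial_{x_3}\rho(\Psi_M(y))$, one has $\rho(\Psi_M(y)) = y_3/M$ by construction, so the substitution yields
\[
\int_\Omega e^{-2N\rho(x)}\eta_M(x)\,dx \;=\; \frac{1}{M^3}\int_{\{y_3>0,\,|y|\le 1/2\}} e^{-2(N/M)y_3}\,\eta(y)\,\frac{dy}{g_M(y)}.
\]
By continuity of $\nabla\rho$ with $g_M(0) = 1$, for $M$ large enough $1/g_M$ is bounded by $2$ on $\supp\eta$.

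Set $\alpha := N/M = 1/\omega(M^{-1})$, which diverges as $M\to\infty$. The upper bound claim is immediate: $|\eta|$ and $1/g_M$ are uniformly bounded, and $\int_0^{1/2}e^{-2\alpha y_3}\,dy_3 \le 1/(2\alpha) = M/(2N)$, so the integral is $O(M^{-3}\cdot M/N) = O(M^{-2}N^{-1})$.

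For the asymptotic limit, I rescale $s := 2\alpha y_3$ in the $y_3$-integration to obtain
\[
M^2 N \int_\Omega e^{-2N\rho}\eta_M\,dx \;=\; \frac{1}{2}\int_{|y'|\le 1/2}\int_0^{\alpha} e^{-s}\,\frac{\eta(y',\,s/(2\alpha))}{g_M(y',\,s/(2\alpha))}\,ds\,dy',
\]
and then pass to the limit via dominated convergence. The integrand is dominated by a fixed multiple of $e^{-s}$, which is integrable on $\{|y'|\le 1/2\}\times[0,\infty)$. Pointwise, for each fixed $(y',s)$, as $\alpha\to\infty$ one has $s/(2\alpha)\to 0$, so continuity of $\eta$ gives $\eta(y',s/(2\alpha))\to\eta(y',0)$; moreover $\Psi_M(y',s/(2\alpha))$ lies within distance $O(1/M)$ of $0\in\partial\Omega$, so $g_M(y',s/(2\alpha))\to \partial_{x_3}\rho(0) = 1$. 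Using $\int_0^\infty e^{-s}\,ds = 1$ then yields the claimed limit $\tfrac{1}{2}\int_{\R^2}\eta(y',0)\,dy'$.

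The main obstacle is the interplay between the two scales: one needs $N/M\to\infty$ so that $e^{-2(N/M)y_3}$ genuinely concentrates at $y_3=0$, while the spatial scale $1/M$ of $\supp\eta_M$ must be small enough that the error $|g_M-1|\le\omega(C/M)$ is $o(1)$. The precise choice $N^{-1} = M^{-1}\omega(M^{-1})$ is designed so that $N/M = \omega(M^{-1})^{-1}\to\infty$ while keeping both effects compatible, which is exactly what the dominated-convergence step above exploits.
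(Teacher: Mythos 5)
Your proof is correct. Note that the paper does not supply its own proof of this lemma: it is imported verbatim from Brown--Salo, so there is no in-paper argument to compare against. Your route---flattening the boundary with the $C^1$ substitution $\Phi_M(x)=(Mx',M\rho(x))$ (valid on a fixed neighborhood of $0$ where $\partial_{x_3}\rho>0$, which contains $\supp\eta_M$ for $M$ large), computing the Jacobian $M^3\partial_{x_3}\rho$, pulling out the factor $1/M^3$, and then performing the Laplace rescaling $s=2(N/M)y_3$ followed by dominated convergence---is the standard way to obtain such boundary-concentration estimates, and you correctly identify that $\alpha=N/M=1/\omega(M^{-1})\to\infty$ is precisely what the choice $N^{-1}=M^{-1}\omega(M^{-1})$ is engineered to give, with the modulus of continuity controlling $|g_M-1|$ uniformly on $\supp\eta$. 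The crude bound via $\int_0^{\infty}e^{-2\alpha y_3}\,dy_3=1/(2\alpha)$ gives the $CM^{-2}N^{-1}$ estimate cleanly, and the dominated-convergence step (with the integrand extended by zero outside $\supp\eta$, dominated by a fixed multiple of $e^{-s}$) yields the limit $\tfrac12\int_{\mathbf{R}^2}\eta(x',0)\,dx'$.
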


\begin{lemma}
\label{second order equation}
Let $A:\closure{\Omega} \to \C^3$ be a continuous vector field and $k :\overline\Omega \to \C$ be a continuous function. If $0$ is not an eigenvalue of the operator 
\[-\Delta + A\cdot D + k : H^1_0(\Omega)\to H^{-1}(\Omega),\]
then there exist solutions to
\[(-\Delta + A\cdot D + k)u = 0\]
of the form
\[u = u_0 + \tilde u_1\]
with $\tilde u_1 \in H^1_0(\Omega)$ and $\|\tilde u_1\|_{H^1(\Omega)} \leq CN^{-1/2}$.
\end{lemma}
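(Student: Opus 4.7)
The plan is to reduce to a second-order equation via the factorization identity \eqref{secondorder_identity}, apply a matrix-valued analogue of Lemma \ref{second order equation}, and then pull back through the first-order factor to obtain the desired solution.

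\textbf{Reduction.} By \eqref{secondorder_identity}, $(P_0(D+A)+Q)(P_0(D+A)-Q_I) = H_{A,W}$. So if $w \in H^1(\Omega)^{4\times 4}$ satisfies $H_{A,W} w = 0$ in $\Omega$, then $U := (P_0(D+A)-Q_I)w$ automatically satisfies $\mathcal{L}_V U = 0$. I will seek $w$ in the form $w = u_0 I_4 + \tilde w$ with $\tilde w \in H^1_0(\Omega)^{4\times 4}$ and $\|\tilde w\|_{H^1} \leq C N^{-1/2}$; then $U$ will have the required shape once I identify the leading term of $(P_0(D+A)-Q_I)(u_0 I_4)$.

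\textbf{Applying Lemma \ref{second order equation}.} The operator $H_{A,W} = (D+A)^2 I_4 + W$ is elliptic with $W \in L^\infty(\Omega)^{4\times 4}$, so the Fredholm-alternative argument used in the scalar case of Lemma \ref{second order equation} applies column by column (after a harmless eigenvalue shift if $0$ happens to lie in the spectrum of $H_{A,W}$) to produce $\tilde w$ solving $H_{A,W} \tilde w = -H_{A,W}(u_0 I_4)$ with Dirichlet boundary values. The needed bound $\|\tilde w\|_{H^1} \leq CN^{-1/2}$ reduces to estimating $\|H_{A,W}(u_0 I_4)\|_{H^{-1}}$, which is controlled by pairing against $H^1_0$ test functions and invoking Lemma \ref{behaviour of u0} to bound the boundary-concentration integrals of $u_0$ and $Du_0$ in terms of $N$, $M$.

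\textbf{Recovering the first-order solution.} Since $u_0$ is scalar, $P_0(D)(u_0 I_4) = P_0(Du_0)$, and the product rule gives $Du_0 = N(\hat t + i\nabla\rho)u_0 + e^{N(i\hat t \cdot x - \rho)} D\eta_M$. Hence
\begin{equation*}
U = NP_0(\hat t + i\nabla\rho)u_0 + R, \qquad R = e^{N(i\hat t \cdot x - \rho)}P_0(D\eta_M) + u_0(P_0(A) - Q_I) + (P_0(D+A) - Q_I)\tilde w.
\end{equation*}
The first term of $R$ is controlled by switching to boundary-adapted coordinates $(x',\tau = \rho(x))$: with $|D\eta_M| \lesssim M$ on a set of $x'$-measure $\lesssim M^{-2}$ and $\tau \in [0, (2M)^{-1}]$, one has $\int |D\eta_M|^2 e^{-2N\rho}\,dx \lesssim M^2 \cdot M^{-2} \cdot \int_0^\infty e^{-2N\tau}\,d\tau \lesssim N^{-1}$. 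The second term is $O(M^{-1}N^{-1/2})$ directly by Lemma \ref{behaviour of u0}, and the third is $O(N^{-1/2})$ since $P_0(D+A) - Q_I$ is bounded $H^1 \to L^2$. Combining, $\|R\|_{L^2} \lesssim N^{-1/2}$.

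\textbf{Main obstacle.} The most delicate point is the matrix-valued version of Lemma \ref{second order equation}: the potential $W$ couples the four components of $w$, so either one needs a matrix Fredholm argument (with eigenvalue shift to guarantee solvability) or one must iterate the scalar lemma, carrying along $W\tilde w$ as a contraction perturbation in $H^1_0$. Making the sharp $N^{-1/2}$ rate come out requires using the first-term cancellation enforced by the choice $N^{-1} = M^{-1}\omega(M^{-1})$, so that the $M^2$ growth of $|D\eta_M|^2$ is compensated by the $M^{-2}$ tangential support and the $(2N)^{-1}$ normal decay; this is exactly the role of the modulus of continuity $\omega$ for $\nabla \rho$ in the $C^1$-boundary hypothesis.
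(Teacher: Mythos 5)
You are proving the wrong statement. The lemma quoted is the scalar result of Brown--Salo, which the paper \emph{cites} rather than proves: under well-posedness of the Dirichlet problem for $-\Delta + A\cdot D + k$, there is a solution $u = u_0 + \tilde u_1$ with $\tilde u_1 \in H^1_0(\Omega)$ and $\|\tilde u_1\|_{H^1} \lesssim N^{-1/2}$. Your proposal instead argues toward Proposition \ref{solutions concentration on boundary} (the boundary-concentrating CGO solutions for the Dirac system), and in the step ``Applying Lemma \ref{second order equation}'' it invokes that lemma --- indeed a matrix strengthening of it --- as an ingredient, which makes the argument circular as a proof of Lemma \ref{second order equation}. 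The genuine proof of the stated lemma is short and direct: invertibility of $-\Delta + A\cdot D + k : H^1_0(\Omega) \to H^{-1}(\Omega)$ reduces matters to the estimate $\|(-\Delta + A\cdot D + k) u_0\|_{H^{-1}(\Omega)} \lesssim N^{-1/2}$, and one then sets $\tilde u_1 := -(-\Delta + A\cdot D + k)^{-1}\big((-\Delta + A\cdot D + k) u_0\big)$. The $H^{-1}$ bound is obtained by pairing against $v \in H^1_0(\Omega)$, integrating by parts to avoid second derivatives of the $C^1$ function $\rho$, and using the near-eikonal cancellation $(\hat t + i\nabla\rho)\cdot(\hat t + i\nabla\rho) = |\hat t|^2 - |\nabla\rho|^2 + 2i\hat t\cdot\nabla\rho \to 0$ at $x_0$, with remainders balanced by the modulus of continuity $\omega$ and the normalization $N^{-1} = M^{-1}\omega(M^{-1})$, together with the $L^2$ estimates of Lemma \ref{behaviour of u0}. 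None of this appears in your write-up.

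Even read as a proof of Proposition \ref{solutions concentration on boundary}, your argument has a substantive gap: the ``harmless eigenvalue shift'' is not harmless. Replacing $H_{A,W}$ by $H_{A,W} + \lambda I$ destroys the factorization \eqref{secondorder_identity}, so $(P_0(D+A) - Q_I)w$ no longer satisfies $\mathcal{L}_V U = 0$. The paper's route avoids precisely this: it applies the scalar Lemma \ref{second order equation} to $-\Delta + 2A\cdot D + \lambda$ with a scalar $\lambda$ chosen for well-posedness, sets $U = (P_0(D+A)-Q_I)u + \tilde R$, and manufactures the correction $\tilde R$ solving $\mathcal{L}_V \tilde R = (\lambda - \tilde W)u$ from the Carleman solvability result Proposition \ref{prop:inhomogeneous_eq_l2}, not from the factorization.
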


At this point we recall some matrix identities used already before. If $Q = \left( \begin{smallmatrix} q_+ I_2 & 0 \\ 0 & q_- I_2 \end{smallmatrix} \right)$ and $Q_I = \left( \begin{smallmatrix} q_- I_2 & 0 \\ 0 & q_+ I_2 \end{smallmatrix} \right)$ and if $a,b\in \mC^3$, we have 
\begin{eqnarray}
\label{matrix identities}
P_0(a)P_0(b) + P_0(b)P_0(a) = 2 a\cdot b,\ \ \ P_0(a) Q = Q_I P_0(a).
\end{eqnarray}

\begin{proof}
(of Proposition \ref{solutions concentration on boundary}) Recall from \eqref{secondorder_identity} that  
\begin{eqnarray}
\label{square of operator}
\mathcal{L}_V (P_0(D+A)-Q_I) = -\Delta + 2A\cdot D + \tilde{W}
\end{eqnarray}
where $\tilde{W}$ is a $4\times 4$ matrix function with $L^\infty$ entries. Choose $\lambda$ such that $0$ is not an eigenvalue of the scalar operator 
\[-\Delta + 2A\cdot D + \lambda : H^1_0(\Omega) \to H^{-1}(\Omega).\]
Apply Lemma \ref{second order equation} to get a scalar solution $u \in H^1(\Omega)$ to
\[(-\Delta + 2A\cdot D + \lambda) u = 0\]
of the form
\[u = u_0 + \tilde u_1,\]
with $\tilde u_1 \in H^1_0(\Omega)$ and $\|\tilde u_1\|_{H^1}\leq CN^{-1/2}$.
We seek solutions to $\mathcal{L}_V U = 0$ of the form $U = (P_0(D+A)-Q_I) u + \tilde R$. Plugging this ansatz into the equation and using \eqref{square of operator} we get that $\tilde R$ satisfies
\[\mathcal{L}_V \tilde R = (\lambda - \tilde{W})u.\]
By Lemmas \ref{behaviour of u0} and \ref{second order equation} we see that $\|u\|_{L^2}\leq CN^{-1/2}$.

If one had well-posedness for the boundary value problem for $\mathcal{L}_V$, we could solve for $\tilde R$ uniquely and obtain the estimate $\|\tilde R\|_{L^2}\leq C N^{-1/2}$. More generally, Proposition \ref{prop:inhomogeneous_eq_l2}, with the choices $\varphi(x) = x_1$ and $h = h_0$ where $h_0 = h_0(A,q)$ is the upper bound for $h$, gives a solution $\tilde{R} = e^{-x_1/h_0} \hat{R}$ satisfying $\norm{\tilde{R}}_{L^2} \leq C \norm{\hat{R}}_{L^2} \leq C \norm{e^{x_1/h_0} (\lambda - \tilde{W}) u}_{L^2} \leq C N^{-1/2}$. Writing
\[U = P_0(D) u_0 + (P_0(A) - Q_I)u_0 + (P_0(D+A)-Q_I) \tilde u_1 + \tilde R\]
and using the estimates of Lemmas \ref{behaviour of u0} and \ref{second order equation}, we have the desired form for the solution $U$.
\end{proof}

\begin{proof}
(of Proposition \ref{boundary determination}) We may assume, after gauge transformations if necessary, that the normal components of $A_j$ at the boundary are null for $j = 1,2$. Let $U_j$ ($j = 1,2$) be solutions to $\mathcal{L}_{V_j} U_j = 0$ constructed in Proposition \ref{solutions concentration on boundary}. By the assumption that $C_{V_1} = C_{V_2}$, we have as in Lemma \ref{lemma:integral_identity} the orthogonality condition 
\begin{align*}
0 &= \int_\Omega U_2^*(V_1 - V_2) U_1 \,dx \\
 &= N^2\int_\Omega P_0(\hat t - i\nabla\rho)(V_1 - V_2)P_0(\hat t + i\nabla\rho)\eta_M^2 e^{-2N\rho} \,dx + O(M^{-1}).
\end{align*}
Multiplying by $M^2 N^{-1}$, taking $M\to\infty$, and using Lemma \ref{behaviour of u0} we get that at the origin
\begin{eqnarray}
\label{at zero}
0 = P_0(\hat t - i\nabla\rho)P_0(A)P_0(\hat t + i\nabla\rho) + P_0(\hat t - i\nabla\rho) Q P_0(\hat t + i\nabla\rho)
\end{eqnarray}
where $A := (A_1 - A_2)(0)$ and $Q := (Q_1 - Q_2)(0)$. Writing $\zeta = \hat{t} + i\nabla \rho(0)$ and applying the matrix identities \eqref{matrix identities} in \eqref{at zero}, we get that 
\begin{eqnarray}
\label{at zero 2}
0 = -P_0(A) P_0(\bar{\zeta})P_0(\zeta) + 2(A\cdot\hat t) P_0(\zeta) + Q_I P_0(\bar{\zeta})P_0(\zeta).
\end{eqnarray}
This is true also when $\hat t$ is replaced by $-\hat t$, so we have
\begin{eqnarray}
\label{at zero 3}
0 = - P_0(A) P_0(\zeta)P_0(\bar{\zeta}) + 2(A\cdot\hat t) P_0(\bar{\zeta}) + Q_I P_0(\zeta)P_0(\bar{\zeta}).
\end{eqnarray}
Adding (\ref{at zero 2}) and (\ref{at zero 3}) together and using $\abs{\zeta}^2 = 2$ we see that 
\begin{eqnarray}
\label{at zero 4}
0 = - P_0(A) + (A \cdot \hat{t}) P_0(\hat{t}) + Q_I.
\end{eqnarray}
If $A$ were nonzero one could take $\hat{t} = A/\abs{A}$ and obtain $Q_I = 0$, and then choosing $\hat{t} \in T_0(\partial \Omega)$ orthogonal to $A$ would give $A = 0$. Therefore, $A = 0$, and going back to \eqref{at zero 4} gives $Q = 0$.
\end{proof}



\providecommand{\bysame}{\leavevmode\hbox to3em{\hrulefill}\thinspace}
\providecommand{\href}[2]{#2}

\end{document}